\documentclass[11pt]{article}

\usepackage[T1]{fontenc}
\usepackage{lmodern}
\usepackage{amsmath,amssymb,amsthm,mathtools}
\usepackage{graphicx}
\usepackage{placeins}
\usepackage{microtype}
\usepackage{cite}
\usepackage{xcolor}
\usepackage{enumitem}
\usepackage[a4paper,margin=30mm]{geometry}
\usepackage[hidelinks]{hyperref}
\hypersetup{pdftitle={Morse--Smale dynamics in discrete Huygens networks},pdfauthor={Jorge Buescu, Emma D'Aniello, Henrique M. Oliveira}}

\newtheorem{theorem}{Theorem}[section]
\newtheorem{proposition}[theorem]{Proposition}
\newtheorem{lemma}[theorem]{Lemma}
\newtheorem{corollary}[theorem]{Corollary}
\theoremstyle{remark}
\newtheorem{remark}[theorem]{Remark}

\newcommand{\TP}{\mathbb T_P}
\newcommand{\R}{\mathbb R}
\newcommand{\one}{\mathbf 1}
\newcommand{\Fix}{\operatorname{Fix}}
\newcommand{\Crit}{\operatorname{Crit}}
\newcommand{\grad}{\operatorname{grad}}
\newcommand{\dist}{\operatorname{dist}}

\title{Morse--Smale dynamics in discrete Huygens networks}
\author{Jorge Buescu$^{1}$, Emma D'Aniello$^{2}$, Henrique M. Oliveira$^{3,*}$}
\date{\today}

\begin{document}
\maketitle
\vspace{-1.1em}

\begin{center}
\begin{minipage}{0.90\textwidth}
\centering
\small
$^{1}$ Departamento de Matem\'atica, Faculdade de Ci\^encias, Universidade de Lisboa,\\
CEMS.UL -- Center for Mathematical Studies, Campo Grande, 1749-016 Lisbon, Portugal\par
\vspace{0.35em}
$^{2}$ Dipartimento di Matematica e Fisica, Universit\`a degli Studi della Campania ``Luigi Vanvitelli'',\\
Viale Lincoln n.~5, 81100 Caserta, Italy\par
\vspace{0.35em}
$^{3}$ Centro de An\'alise Matem\'atica, Geometria e Sistemas Din\^amicos (CAMGSD),\\
Departamento de Matem\'atica, Instituto Superior T\'ecnico, Universidade de Lisboa,\\
Av.~Rovisco Pais 1, 1049-001 Lisbon, Portugal\par
\vspace{0.35em}
$^{*}$ Corresponding author: \href{mailto:henrique.m.oliveira@tecnico.ulisboa.pt}{henrique.m.oliveira@tecnico.ulisboa.pt}
\end{minipage}
\end{center}
\vspace{0.5em}

\begin{abstract}
We study discrete networks of $N+1$ identical phase oscillators with positive
reciprocal weights and smooth odd periodic interactions, motivated by discrete
models of Huygens synchronisation. The interaction laws define an intrinsic
potential, and removing the common phase gives a gradient map on the $N$-torus.
A weighted graph Laplacian yields explicit step sizes for strict Lyapunov
decrease and global invertibility. Our main result is a Morse--Smale criterion
for maps $z\mapsto z+hK\sin z$: if $K$ is symmetric, nonsingular and weakly
diagonally dominant, the map is Morse--Smale for $0<h<1/\lambda_{\max}(K)$, and this interval is optimal.
The proof combines invariant coordinate slabs with a maximum principle for the
adjoint variational equation. The criterion applies to unit-weight paths of any
length and to the three-leaf star, and a weighted version covers paths with
discretely concave weights.
By structural stability, it extends to every graph containing a spanning path
with sufficiently weak additional links.
Conversely, we construct infinitely many trees with Morse potentials and an
almost-global sink, whose maps nevertheless have non-transverse heteroclinic
intersections.	
	\end{abstract}

\vspace{0.4em}
\begin{center}
\begin{minipage}{0.90\textwidth}
\small
\textbf{2020 Mathematics Subject Classification.}
Primary 37C05, 37D15; Secondary 37C70, 37C75, 05C50.\par
\smallskip
\textbf{Keywords.}
Discrete phase-coupled oscillators; Huygens synchronisation; symmetric coupling; 
Lyapunov functions; graph Laplacians; transversality; Morse--Smale systems; structural stability; trees.
\end{minipage}
\end{center}
\vspace{0.5em}
\clearpage

\section{Introduction}
\label{sec:introduction}
Networks of coupled phase oscillators describe synchronisation and phase
locking in physical, biological and engineered systems
\cite{Kuramoto1984,PikovskyRosenblumKurths2001,Strogatz2000,AcebronEtAl2005,DorflerBullo2014},
and pairwise potentials and graph Laplacians relate interaction laws to
continuous-time dynamics \cite{ScardoviSarletteSepulchre2007,DorflerBullo2014}.
For discrete maps the update size also matters: a potential decreasing along
a gradient flow may increase after a finite step, and the map may lose
invertibility or acquire non-trivial periodic orbits.  Discrete phase models
appear in coordination algorithms on tori
\cite{SarletteSepulchreLeonard2006,KleinLeeMorgansenJavidi2008,KleinMorgansen2008}
and in studies of discrete Kuramoto models, deterministic and stochastic
\cite{MaistrenkoEtAl2008,ZhangZhu2020DiscreteKuramoto,Shim2020DiscreteKuramoto,
	KalmarNagyHorvathVincze2025,Wu2026DiscreteKuramoto,JafarianMamduhiJohansson2024}.

Our motivation is discrete Huygens-type models, in which one update is the
net effect of mutual impacts over one cycle, so the interaction strength $h$
is a physical parameter.  After the two-oscillator model
\cite{OliveiraMelo2015}, three-oscillator models with all-to-all
\cite{DAnielloOliveira2024} and aligned nearest-neighbour
\cite{BuescuDAnielloOliveira2025Aligned} interaction were studied, with an
explicit Lyapunov function in \cite{BuescuDAnielloOliveira2025Lyapunov} and
Morse--Smale and structural stability properties in
\cite{BuescuOliveira2026MorseSmale}.  Here we allow arbitrary connected
undirected weighted graphs, with reciprocal coupling (each pairwise
interaction acts equally and oppositely on its two oscillators) and a
possibly different smooth periodic odd law on each edge.  For $g=\sin$ the
coupling is repulsive in the Kuramoto convention, so phase opposition, the
anti-phase state observed by Huygens, is the attractor.

The interaction laws define an intrinsic discrete potential, and removing
the common phase gives an $N$-dimensional gradient map with an explicit
constant metric.  Standard gradient and spectral arguments, with a weighted
graph Laplacian supplying the bounds, give explicit intervals of $h$ for
strict Lyapunov decrease, global invertibility and almost-everywhere
convergence to minima.  On sinusoidal trees the $2^N$ critical
configurations are explicit, and phase opposition is the unique sink.

Our main result is a matrix criterion for Morse--Smale dynamics.
For a vector $z=(z_1,\ldots,z_N)^T$, we write
$\sin z=(\sin z_1,\ldots,\sin z_N)^T$.
We prove that if $K$ is symmetric, nonsingular and weakly diagonally
dominant, then $z\mapsto z+hK\sin z$ is Morse--Smale on $\mathbb{T}^N$
for every $0<h<1/\lambda_{\max}(K)$; such a matrix is automatically
positive definite.
 This interval is optimal: at $h=1/\lambda_{\max}(K)$ the map ceases to be
 a diffeomorphism.
The proof combines two consequences of diagonal dominance: each coordinate
slab $\{\cos z_i\le0\}$ is forward invariant, and the maximum of a solution
of the adjoint variational equation propagates forward inside such a slab.
A non-transverse intersection would produce an adjoint solution decaying at
both ends, which this propagation rules out.
General discretisation results \cite{Garay1996Discretization} give
Morse--Smale dynamics for the map only when the gradient flow is already
known to be Morse--Smale, and only for sufficiently small, non-explicit
$h$.  Our criterion needs no information about the flow and holds on an explicit 
and optimal 
interval of $h$, which matters here because $h$ is a physical
interaction strength rather than a step size that may be taken small. 
The criterion applies to unit-weight paths of every finite length and to
the star with three leaves.
It extends to weighted paths whose weights are discretely concave.
By structural stability, every graph containing a spanning path admits an
open set of positive weights with Morse--Smale dynamics, when the
additional links are sufficiently weak.  A phase translation transfers all
of this to the opposite sinusoidal sign.

Without diagonal dominance, transversality may fail.  
 We construct an infinite family of trees,
with $4r+2$ vertices for every $r\ge1$, whose maps have Morse potentials,
strict Lyapunov decrease and a unique sink attracting almost every orbit,
yet have non-transverse heteroclinic intersections for every admissible
$h$, however small.  The mechanism is an invariant circle joining two fixed
points, along which the stable and unstable tangent spaces coincide in
dimension $3r+1$.  The failure is not forced by dimension: the index
difference $2r+1$ would allow a transverse intersection.  We give the
six-vertex case explicitly.  Hence convergence and hyperbolicity alone do
not imply the Morse--Smale property.
 Since these maps are not diffeomorphisms for larger $h$, they are never Morse--Smale.
 
Sections~\ref{sec:reciprocal-gradient} and~\ref{sec:finite-step}
establish the potential and the finite-step estimates.
Section~\ref{sec:morse-smale} proves the matrix criterion, the optimality
of its step-size interval and a weighted version.
Section~\ref{sec:graph-topology} applies the criterion to networks, derives
the structural stability consequences, constructs the tree counterexamples
and describes the constraints imposed by cycles.
Section~\ref{sec:applications} discusses physical connections, and
Section~\ref{sec:conclusion} concludes.

\section{The intrinsic discrete potential and quotient dynamics}
\label{sec:reciprocal-gradient}

We consider a network of $N+1$ identical oscillators, $N\ge1$, each
moving along a cycle of period $P>0$.  The state of oscillator $i$ is its
phase $\theta_i\in\TP=\R/P\mathbb Z$, and the state of the network is
$\theta=(\theta_1,\ldots,\theta_{N+1})^T\in\TP^{N+1}$.
The oscillators interact in pairs.  We record which pairs interact by a
connected undirected graph $\mathcal G=(\mathsf V,\mathcal E)$ with
vertex set $\mathsf V=\{1,\ldots,N+1\}$: oscillators $i$ and $j$ interact
if and only if $\{i,j\}\in\mathcal E$.

Each interacting pair has its own coupling strength and interaction law.
To each edge $e$ we assign a weight $a_e>0$ and a smooth function
$g_e:\R\to\R$, which describes how the phase correction produced by the
interaction depends on the phase difference.  We assume
\begin{equation}
	\label{eq:odd-periodic}
	g_e(s+P)=g_e(s),\qquad g_e(-s)=-g_e(s).
\end{equation}
Periodicity reflects the fact that phases are defined modulo $P$.
Since both oscillators of an interacting pair share the weight $a_e$ and
the law $g_e$, oddness makes the coupling reciprocal: the two oscillators
receive equal and opposite corrections.

We work in a co-rotating frame, in which the common phase advance of the
uncoupled oscillators is removed.  One update of the network then adds to
each phase the corrections produced by all its neighbours, scaled by the
interaction strength $h>0$:
\begin{equation}
	\label{eq:network-components}
	\theta_i\longmapsto\theta_i
	+h\sum_{j:\{i,j\}\in\mathcal E}a_{\{i,j\}}\,g_{\{i,j\}}(\theta_i-\theta_j)
	\pmod P,\qquad i=1,\ldots,N+1 .
\end{equation}
Since each $g_e$ is odd, the right-hand side does not depend on which endpoint of an
edge is written first.

To write \eqref{eq:network-components} in matrix form, choose an
orientation and an ordering of the edges.  For an edge $e$ with head $i$
and tail $j$, put $b_e=\mathrm e_i-\mathrm e_j$, where $\mathrm e_i$ are
the standard coordinate vectors.  The incidence matrix
$B=(b_e)_{e\in\mathcal E}\in\R^{(N+1)\times|\mathcal E|}$ satisfies
$B^T\one=0$, where $\one=(1,\ldots,1)^T\in\R^{N+1}$, and $B^T\theta$ is
the vector of phase differences across the edges.  With
$A=\operatorname{diag}(a_e)$ and
$\mathbf g(z)=(g_e(z_e))_{e\in\mathcal E}$, the network map is
\begin{equation}
	\label{eq:network}
	F_h(\theta)=\theta+hBA\mathbf g(B^T\theta)\pmod P,
	\qquad h>0.
\end{equation}
Write $\widetilde F_h$ for the lift to $\R^{N+1}$ given by the same
formula without reduction modulo $P$.  We use the same symbols for scalar
potentials and their periodic lifts.  Vector identities on tori are
understood modulo $P$; differentiation uses real representatives.

%

Oddness also makes the network map a gradient step.  For each edge, let
\begin{equation}
	\label{eq:Phi-def}
	\Phi_e(s)=-\int_0^s g_e(u)\,du .
\end{equation}
Since $g_e$ is odd, $\Phi_e$ is even; since a periodic odd function has
zero integral over a period, $\Phi_e$ is $P$-periodic.  Thus $\Phi_e$ is
a smooth, even, $P$-periodic function with $\Phi_e'=-g_e$, and every
other primitive of $-g_e$ differs from it by a constant.  We define the
edge potential $\Psi$ and the intrinsic discrete potential
$\mathcal V$ by
\begin{equation}
	\label{eq:full-potential}
	\Psi(z)=\sum_{e\in\mathcal E}a_e\Phi_e(z_e),
	\quad z\in\TP^{|\mathcal E|},
	\qquad \mathcal V(\theta)=\Psi(B^T\theta).
\end{equation}
For $g_e(s)=\sin s$ one may take $\Phi_e(s)=\cos s$, so
$\mathcal V(\theta)=\sum_e a_e\cos(b_e^T\theta)$, up to a constant. Pairwise phase potentials are classical
\cite{Kuramoto1984,ScardoviSarletteSepulchre2007,DorflerBullo2014};
discrete descent constructions on phase tori also have antecedents
\cite{SarletteSepulchreLeonard2006,KleinMorgansen2008}.

\begin{proposition}[Gradient structure and phase symmetry]
	\label{lem:edge-potential}
	\label{thm:full-gradient}
	\label{thm:rotation}
	The potential $\mathcal V$ is independent of the edge orientation and
	determined up to an additive constant.  Moreover:
	\begin{enumerate}[label=\textup{(\roman*)}]
		\item The network map is a gradient step for $\mathcal V$:
		\begin{equation}
			\label{eq:full-gradient-components}
			\nabla\Psi(z)=-A\mathbf g(z),\qquad
			\nabla\mathcal V(\theta)=-BA\mathbf g(B^T\theta),
		\end{equation}
		\begin{equation}
			\label{eq:full-gradient-step}
			\widetilde F_h(\theta)=\theta-h\nabla\mathcal V(\theta).
		\end{equation}
		\item For every $\alpha\in\R$,
		\begin{equation}
			\label{eq:F-rotation}
			\mathcal V(\theta+\alpha\one)=\mathcal V(\theta),\qquad
			\widetilde F_h(\theta+\alpha\one)=\widetilde F_h(\theta)+\alpha\one .
		\end{equation}
		\item $\one^T\nabla\mathcal V(\theta)=0$, and the lifted map preserves
		the phase sum:
		\begin{equation}
			\label{eq:sum-conserved}
			\one^T\widetilde F_h(\theta)=\one^T\theta .
		\end{equation}
	\end{enumerate}
	In particular, $F_h$ descends to the $N$-dimensional quotient
	$\TP^{N+1}/\TP\simeq\TP^N$ by common-phase shifts.
\end{proposition}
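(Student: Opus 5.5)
The plan is a direct computation from the definitions \eqref{eq:Phi-def}--\eqref{eq:full-potential}, in four steps: orientation independence, the gradient formula, the phase symmetry, and the passage to the quotient.

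\emph{Orientation and constants.} Reversing the orientation of an edge $e$ replaces $b_e$ by $-b_e$, so $b_e^T\theta$ becomes $-b_e^T\theta$. Since $\Phi_e$ is even, $a_e\Phi_e(b_e^T\theta)$ is unchanged, and $\mathcal V$ does not depend on the orientation. The only freedom in the definition is the choice of primitive of $-g_e$. Any other choice differs from $\Phi_e$ by a constant $c_e$, so $\mathcal V$ changes by $\sum_e a_ec_e$. Evenness and $P$-periodicity of $\Phi_e$ were already noted before the statement. I also check that the network map \eqref{eq:network} is orientation independent, so that (i) is consistent: the $e$-th column contributes $a_eb_eg_e(b_e^T\theta)$, which is invariant under $b_e\mapsto-b_e$ because $g_e$ is odd.

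\emph{Item (i).} Since $\Psi$ is a sum of one-variable functions, $\partial\Psi/\partial z_e=a_e\Phi_e'(z_e)=-a_eg_e(z_e)$, which gives $\nabla\Psi=-A\mathbf g$. The chain rule applied to $\mathcal V=\Psi\circ B^T$ gives $\nabla\mathcal V(\theta)=B\,\nabla\Psi(B^T\theta)=-BA\mathbf g(B^T\theta)$. Comparing with the lift of \eqref{eq:network} yields \eqref{eq:full-gradient-step}.

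\emph{Items (ii) and (iii).} Because $B^T\one=0$, we have $B^T(\theta+\alpha\one)=B^T\theta$. Hence $\mathcal V(\theta+\alpha\one)=\mathcal V(\theta)$, and the increment $hBA\mathbf g(B^T\theta)$ is the same at $\theta$ and at $\theta+\alpha\one$, which gives \eqref{eq:F-rotation}. Next, $\one^T\nabla\mathcal V=-(B^T\one)^TA\mathbf g=0$. Then
\[
\one^T\widetilde F_h(\theta)=\one^T\theta-h\,\one^T\nabla\mathcal V(\theta)=\one^T\theta,
\]
which is \eqref{eq:sum-conserved}.

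\emph{Descent to the quotient.} By \eqref{eq:F-rotation}, with $\alpha\in\R$ reduced modulo $P$, the map $F_h$ commutes with the diagonal action of $\TP$ on $\TP^{N+1}$, so it sends orbits of this action to orbits and induces a well-defined map on $\TP^{N+1}/\TP$. To identify this quotient with $\TP^N$, I use the map $\theta\mapsto(\theta_1-\theta_{N+1},\ldots,\theta_N-\theta_{N+1})$. It is a surjective homomorphism $\TP^{N+1}\to\TP^N$ whose kernel is exactly the diagonal circle. This identification is the only step that needs some care, and it is elementary.
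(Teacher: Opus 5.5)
Your proposal is correct and follows essentially the same direct computation as the paper's proof: evenness of $\Phi_e$ and oddness of $g_e$ give orientation independence, the chain rule gives the gradient formulas, and $B^T\one=0$ gives the phase symmetry, the orthogonality and the conservation of the phase sum. Your identification of the quotient via the reference-phase homomorphism is the same as the matrix $Q$ the paper introduces immediately after the proposition.
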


\begin{proof}
	Differentiation and the chain rule give \eqref{eq:full-gradient-components}
	and \eqref{eq:full-gradient-step}.  Reversing an edge changes the signs of
	both $b_e$ and $b_e^T\theta$; evenness of $\Phi_e$ preserves the
	potential, and oddness of $g_e$ preserves the map.  The only freedom in
	the construction is the additive constant in each primitive.  Finally,
	$B^T\one=0$ gives \eqref{eq:F-rotation}, and $\one^TB=0$ gives the
	orthogonality and \eqref{eq:sum-conserved}.
\end{proof}

To write the quotient map explicitly, we use the last phase as a
reference.  Let $I_N$ be the identity matrix and $\one_N\in\R^N$ the
all-ones vector, and set
\[
	Q=\begin{pmatrix}I_N&-\one_N\end{pmatrix},
	\qquad S=\begin{pmatrix}I_N\\0\end{pmatrix},
\]
so that $Q\in\R^{N\times(N+1)}$ and $S\in\R^{(N+1)\times N}$, whose last
row is zero.  Then $QS=I_N$, $Q\one=0$, and
\[
	x=Q\theta
	=(\theta_1-\theta_{N+1},\ldots,\theta_N-\theta_{N+1})^T\in\TP^N.
\]
The section $Sx$ sets the last phase to zero.  Since
$\theta=Sx+\theta_{N+1}\one$, the edge differences satisfy
$z=B^T\theta=Cx\in\TP^{|\mathcal E|}$, where
\begin{equation}
	\label{eq:reduced-matrices}
	C=B^TS,\qquad M=QQ^T=I_N+\one_N\one_N^T,\qquad
	M^{-1}=I_N-\frac{\one_N\one_N^T}{N+1}.
\end{equation}
The eigenvalues of $M$ are $1$, with multiplicity $N-1$, and $N+1$, with
multiplicity one.  Equip the quotient with the constant metric
\begin{equation}
	\label{eq:metric}
	\langle u,v\rangle_{M^{-1}}=u^TM^{-1}v,\qquad u,v\in\R^N,
\end{equation}
and denote its Riemannian gradient by $\grad_{M^{-1}}$.

%
%
%
%

\begin{proposition}[Reduced potential and gradient map]
	\label{thm:inherited-potential}
	\label{thm:reduced-gradient}
	There is a unique smooth \(V:\TP^N\to\R\) satisfying
	\(\mathcal V=V\circ Q\). It is given by
	\begin{equation}
		\label{eq:reduced-potential}
		V(x)=\Psi(Cx),\qquad \nabla V(x)=-C^TA\mathbf g(Cx).
	\end{equation}
	The induced map \(\bar F_h:\TP^N\to\TP^N\) is
	\begin{equation}
		\label{eq:reduced-gradient-map}
		\bar F_h(x)=x-hM\nabla V(x)
		=x-h\grad_{M^{-1}}V(x),
		\qquad \grad_{M^{-1}}V=M\nabla V.
	\end{equation}
\end{proposition}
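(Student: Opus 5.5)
The plan is to build $V$ from the decomposition $\theta=Sx+\theta_{N+1}\one$, verify the gradient formula by the chain rule, and then push the lifted map through $Q$.

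\emph{Existence and uniqueness.} Uniqueness is immediate. $Q:\TP^{N+1}\to\TP^N$ is surjective, since $QSx=x$, so any $V$ with $\mathcal V=V\circ Q$ is determined by $V(x)=\mathcal V(Sx)$. For existence, define $V(x)=\mathcal V(Sx)=\Psi(B^TSx)=\Psi(Cx)$. This is smooth and well defined on $\TP^N$ because $C$ has integer entries and $\Psi$ is $P$-periodic in each variable. To check $\mathcal V=V\circ Q$, write $\theta=SQ\theta+\theta_{N+1}\one$ and use the phase invariance \eqref{eq:F-rotation} of Proposition~\ref{thm:rotation}:
\[
\mathcal V(\theta)=\mathcal V(SQ\theta)=V(Q\theta).
\]
The gradient formula then follows from the chain rule and \eqref{eq:full-gradient-components}:
\[
\nabla V(x)=C^T\nabla\Psi(Cx)=-C^TA\mathbf g(Cx).
\]

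\emph{The induced map.} Apply $Q$ to the lifted map \eqref{eq:full-gradient-step}, which gives $Q\widetilde F_h(\theta)=Q\theta-hQ\nabla\mathcal V(\theta)$. The key identity to establish is
\[
\nabla\mathcal V(\theta)=Q^T\nabla V(Q\theta).
\]
It follows by differentiating $\mathcal V=V\circ Q$. Alternatively, compare $BA\mathbf g(B^T\theta)$ with $Q^TC^TA\mathbf g(CQ\theta)$: note that $CQ\theta=B^TSQ\theta=B^T\theta$, using $B^T\one=0$, and that $Q^TC^T=Q^TS^TB=(SQ)^TB$. One checks $(SQ)^TB=B$ because $SQ=I-\mathrm e_{N+1}\one^T$ and $\one^TB=0$.

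Substituting the identity gives
\[
Q\widetilde F_h(\theta)=x-hQQ^T\nabla V(x)=x-hM\nabla V(x),
\qquad x=Q\theta .
\]
This depends only on $x$, so $\bar F_h$ is well defined, consistent with the descent statement in Proposition~\ref{thm:rotation}. Reduction modulo $P$ is compatible because $Q$ is an integer matrix.

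\emph{Riemannian gradient.} Finally, $\grad_{M^{-1}}V$ is characterised by $\langle \grad V,v\rangle_{M^{-1}}=dV(v)=\nabla V^Tv$ for all $v$. That is, $M^{-1}\grad V=\nabla V$, so $\grad_{M^{-1}}V=M\nabla V$.

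The only step needing care is the identity $\nabla\mathcal V=Q^T\nabla V\circ Q$ and its compatibility with the torus quotient. Every step is a direct computation, so no real obstacle is expected.
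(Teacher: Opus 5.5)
Your proof is correct and follows the paper's argument: phase invariance makes $\mathcal V$ constant on the fibres of $Q$, evaluation at $Sx$ gives $V=\Psi\circ C$, the chain rule gives $\nabla\mathcal V(\theta)=Q^T\nabla V(Q\theta)$, and applying $Q$ to the lifted map with $QQ^T=M$ gives the reduced gradient map. One slip in your optional cross-check: in fact $SQ=I_{N+1}-\one\,\mathrm e_{N+1}^T$ (so $SQ\theta=\theta-\theta_{N+1}\one$), and it is $(SQ)^T=I_{N+1}-\mathrm e_{N+1}\one^T$ that, with $\one^TB=0$, gives $(SQ)^TB=B$; your main chain-rule argument does not depend on this.
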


\begin{proof}
	By \eqref{eq:F-rotation}, \(\mathcal V\) is constant on each fibre of
	\(Q\); evaluation at \(Sx\) gives the unique factor \(V(x)=\Psi(Cx)\).
	Differentiation gives its gradient and
	\begin{equation}
		\label{eq:chain-gradient}
		\nabla_\theta\mathcal V(\theta)=Q^T\nabla V(Q\theta).
	\end{equation}
	Apply \(Q\) to \eqref{eq:full-gradient-step} and use \(QQ^T=M\).
	The metric \eqref{eq:metric} then gives the stated Riemannian gradient.
\end{proof}

The choice of the last phase as reference is immaterial: any other
reference phase gives a linearly conjugate representation.  The
associated continuous gradient equations, with continuous time \(\tau\),
are
\begin{equation}
	\label{eq:full-flow}
	\frac{d\theta}{d\tau}=-\nabla\mathcal V(\theta),
	\qquad \frac{dx}{d\tau}=-M\nabla V(x).
\end{equation}
Along a non-equilibrium reduced trajectory,
\(dV/d\tau=-\nabla V^TM\nabla V<0\).
The discrete map has the algebraic form of an explicit Euler step of
\eqref{eq:full-flow}.  Since \(h\) is a physical interaction strength
rather than a step size that may be taken small, descent after a finite
update is not automatic and requires an explicit bound on \(h\).

%
%
%
%
%
%
%
%
%
%
%
%
%
%
%
%
%
%

\section{Finite-step global dynamics}
\label{sec:finite-step}

   We now bound the admissible interaction strength $h$ by the spectrum of a
weighted graph Laplacian. 
Use the Euclidean norm for vectors and its induced norm \(\|\cdot\|_2\)
for matrices. Define
\begin{equation}
\label{eq:Lbeta}
 \beta_e=a_e\|g_e'\|_\infty,\qquad
 W_\beta=\operatorname{diag}(\beta_e),\qquad
 L_\beta=BW_\beta B^T,
\end{equation}
where \(\|g_e'\|_\infty=\sup_{s\in\R}|g_e'(s)|\), and put
\begin{equation}
\label{eq:Lambda}
 \Lambda=\lambda_{\max}(L_\beta).
\end{equation}
Here \(\lambda_{\max}\) denotes the largest eigenvalue of a symmetric
matrix. If \(\Lambda=0\), all interaction laws vanish by oddness and
\(F_h\) is the identity. Henceforth assume \(\Lambda>0\).
The matrix \(L_\beta\) is a weighted graph Laplacian
\cite{Chung1997,DorflerBullo2014}.

\begin{lemma}[Hessian bound]
	\label{thm:Hessian-bound}
	For every \(\theta\in\TP^{N+1}\) and \(v\in\R^{N+1}\),
	\begin{equation}
		\label{eq:Hessian-quadratic}
		|v^TD^2\mathcal V(\theta)v|
		\le v^TL_\beta v\le\Lambda\|v\|^2,
		\qquad \|D^2\mathcal V(\theta)\|_2\le\Lambda.
	\end{equation}
\end{lemma}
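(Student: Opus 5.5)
The plan is to compute the Hessian of $\mathcal V$ explicitly from the gradient formula in Proposition~\ref{thm:full-gradient} and then compare it edgewise with $L_\beta$. Differentiating $\nabla\mathcal V(\theta)=-BA\mathbf g(B^T\theta)$ once more by the chain rule gives
\[
D^2\mathcal V(\theta)=-B\,A\,\operatorname{diag}\bigl(g_e'(b_e^T\theta)\bigr)\,B^T
=-\sum_{e\in\mathcal E}a_e\,g_e'(b_e^T\theta)\,b_eb_e^T .
\]
Since $g_e$ is $P$-periodic, this expression is well defined on $\TP^{N+1}$; the derivatives are evaluated at real representatives, as agreed earlier.

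Next I form the quadratic form. For $v\in\R^{N+1}$,
\[
v^TD^2\mathcal V(\theta)v=-\sum_e a_e g_e'(b_e^T\theta)(b_e^Tv)^2 .
\]
Each edge weight satisfies $|a_eg_e'(b_e^T\theta)|\le a_e\|g_e'\|_\infty=\beta_e$. The triangle inequality then gives
\[
|v^TD^2\mathcal V(\theta)v|\le\sum_e\beta_e(b_e^Tv)^2=v^TBW_\beta B^Tv=v^TL_\beta v .
\]
The middle bound is the point where positivity of the terms $(b_e^Tv)^2$ is used. Because $L_\beta$ is symmetric positive semidefinite, the Rayleigh quotient gives $v^TL_\beta v\le\Lambda\|v\|^2$.

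Finally, for the operator norm I use that $D^2\mathcal V(\theta)$ is symmetric. Its $2$-norm therefore equals $\sup_{\|v\|=1}|v^TD^2\mathcal V(\theta)v|$, which is at most $\Lambda$ by the previous step.

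There is no real obstacle in this argument. The only points needing care are the following:
\begin{enumerate}[label=\textup{(\alph*)}]
\item Justify $\|g_e'\|_\infty<\infty$. This holds because $g_e'$ is continuous and periodic.
\item Recall that the operator norm of a symmetric matrix equals the maximal absolute value of its quadratic form on the unit sphere. This is what converts the quadratic-form bound into the norm bound.
\end{enumerate}
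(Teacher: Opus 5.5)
Your proposal is correct and follows the paper's proof essentially verbatim: you write the Hessian as $-B\Gamma(\theta)B^T$, bound each edge term by $\beta_e(b_e^Tv)^2$ and sum to get $v^TL_\beta v$, then apply the Rayleigh bound and use symmetry for the operator norm. Two small imprecisions: the nonnegativity of $(b_e^Tv)^2$ is actually used in the first inequality, not the middle one, and the Rayleigh upper bound needs only symmetry of $L_\beta$, not positive semidefiniteness.
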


\begin{proof}
	With \(\Gamma(\theta)=\operatorname{diag}
	(a_eg_e'((B^T\theta)_e))\), differentiation gives
	\begin{equation}
		\label{eq:Hessian-edge-form}
		D^2\mathcal V(\theta)=-B\Gamma(\theta)B^T.
	\end{equation}
	Thus
	\[
	|v^TD^2\mathcal V(\theta)v|
	\le\sum_e\beta_e(b_e^Tv)^2=v^TL_\beta v
	\le\Lambda\|v\|^2,
	\]
	and symmetry gives the norm bound.
\end{proof}

\begin{theorem}[Descent and invertibility]
	\label{thm:finite-estimates}
	\label{thm:full-Lyapunov}
	\label{thm:reduced-Lyapunov}
	\label{thm:diffeomorphism}
	For \(0<h<2/\Lambda\), let \(c_h=h(1-h\Lambda/2)>0\). Then
	\begin{equation}
		\label{eq:reduced-descent}
		\begin{aligned}
			\mathcal V(F_h(\theta))
			&\le\mathcal V(\theta)-c_h\|\nabla\mathcal V(\theta)\|^2,\\
			V(\bar F_h(x))
			&\le V(x)-c_h\nabla V(x)^TM\nabla V(x),
		\end{aligned}
	\end{equation}
	so both potentials decrease strictly away from their critical sets.
	If \(0<h<1/\Lambda\), then \(F_h\) and \(\bar F_h\) are smooth global
	diffeomorphisms.
\end{theorem}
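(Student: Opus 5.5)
The descent inequality follows from Taylor's theorem with the Hessian bound of Lemma~\ref{thm:Hessian-bound}, applied on the lift. Fix $\theta$ and set $v=-h\nabla\mathcal V(\theta)$, so that $\widetilde F_h(\theta)=\theta+v$ by \eqref{eq:full-gradient-step}. The integral form of Taylor's formula gives
\[
\mathcal V(\theta+v)=\mathcal V(\theta)+\nabla\mathcal V(\theta)^Tv+\int_0^1(1-t)\,v^TD^2\mathcal V(\theta+tv)v\,dt .
\]
By \eqref{eq:Hessian-quadratic} the remainder is at most $\tfrac{\Lambda}{2}\|v\|^2$. Hence
\[
\mathcal V(F_h(\theta))\le\mathcal V(\theta)-h\|\nabla\mathcal V\|^2+\tfrac{\Lambda h^2}{2}\|\nabla\mathcal V\|^2=\mathcal V(\theta)-c_h\|\nabla\mathcal V(\theta)\|^2 .
\]
Periodicity of $\mathcal V$ lets us evaluate at the lift.

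For the reduced inequality I would simply transport the full one. Take $\theta=Sx$. Then $\mathcal V(\theta)=V(x)$ and $\mathcal V(F_h(\theta))=V(Q F_h(\theta))=V(\bar F_h(x))$ by Proposition~\ref{thm:reduced-gradient}. By \eqref{eq:chain-gradient}, $\|\nabla\mathcal V(\theta)\|^2=\nabla V(x)^TQQ^T\nabla V(x)=\nabla V^TM\nabla V$. Strictness away from critical points is immediate because $c_h>0$ and $M$ is positive definite.

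For invertibility, assume $h<1/\Lambda$. The derivative of the lift is $D\widetilde F_h=I-hD^2\mathcal V$. Its symmetric part has eigenvalues at least $1-h\Lambda>0$, so it is nonsingular everywhere. Moreover, $\widetilde F_h-\mathrm{id}=-h\nabla\mathcal V$ is a contraction on $\R^{N+1}$ with Lipschitz constant $h\Lambda<1$, by the mean value inequality and $\|D^2\mathcal V\|_2\le\Lambda$. Solving $\widetilde F_h(\theta)=y$ amounts to finding a fixed point of $\theta\mapsto y+h\nabla\mathcal V(\theta)$, which is a contraction. The Banach fixed point theorem therefore gives a unique solution, so $\widetilde F_h$ is a bijection of $\R^{N+1}$. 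The inverse function theorem then makes it a smooth diffeomorphism.

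The main obstacle is descending this to the torus. $\widetilde F_h$ commutes with translations by $P\mathbb Z^{N+1}$ because $\nabla\mathcal V$ is $P$-periodic, and so does its inverse. Hence $F_h$ is a well-defined bijective local diffeomorphism of $\TP^{N+1}$, which makes it a diffeomorphism.

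For $\bar F_h$ I would argue the same way using the equivariance \eqref{eq:F-rotation}. $\widetilde F_h$ commutes with shifts by $\alpha\one$ and by the lattice, so it descends to a bijection of $\TP^{N+1}/\TP\simeq\TP^N$, and $Q$ intertwines $F_h$ with $\bar F_h$. Alternatively, one can check directly that $D\bar F_h=I-hMD^2V$ is invertible. This matrix is similar to $I-hM^{1/2}D^2VM^{1/2}$. Also $M^{1/2}D^2VM^{1/2}$ has the same nonzero spectrum as $Q^TD^2VQ=D^2\mathcal V$, whose eigenvalues have modulus at most $\Lambda$. So the eigenvalues of $D\bar F_h$ lie in $[1-h\Lambda,1+h\Lambda]$ and are positive.
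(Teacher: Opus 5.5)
Your proof is correct. The descent estimates, and their transfer to the quotient via $\theta=Sx$ and \eqref{eq:chain-gradient}, follow the paper's argument. The difference lies in how you prove global invertibility.

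The paper works on the compact torus. Positivity of the eigenvalues of $I-hD^2\mathcal V$ makes $F_h$ an orientation-preserving local diffeomorphism. Compactness and connectedness then give surjectivity, and the homotopy $\theta\mapsto\theta-th\nabla\mathcal V(\theta)$ gives degree one, so $F_h$ is a one-sheeted covering.

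You work instead on the universal cover. Since $\|D^2\mathcal V\|_2\le\Lambda$, the map $\theta\mapsto y+h\nabla\mathcal V(\theta)$ is a contraction of $\R^{N+1}$. Banach's theorem therefore makes $\widetilde F_h$ a bijection, and the inverse function theorem makes it a diffeomorphism. Lattice equivariance passes to the inverse, so it descends to the torus.

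Your route is more elementary, since it needs no degree or covering theory. It also gives slightly more: the inverse lift is globally Lipschitz with constant $1/(1-h\Lambda)$ and can be computed by iteration. The paper's topological argument, on the other hand, uses only positivity of the spectrum of $D\widetilde F_h$, that is, only the upper bound $h\,\lambda_{\max}(D^2\mathcal V)<1$ together with degree one. Your contraction needs the two-sided norm bound. Lemma~\ref{thm:Hessian-bound} supplies both, so nothing is lost here.

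For the quotient, both arguments rest on equivariance under common-phase shifts. Your additional check that $D\bar F_h=I-hMD^2V$ is similar to $I-hM^{1/2}D^2VM^{1/2}$, with spectrum in $[1-h\Lambda,1+h\Lambda]$, is correct. It anticipates the computation the paper uses in Proposition~\ref{thm:Morse-hyperbolic}.
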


\begin{proof}
	Taylor's inequality on the periodic lift, with increment
	\(-h\nabla\mathcal V(\theta)\), and Lemma~\ref{thm:Hessian-bound} give
	the first descent estimate \cite{Nesterov2004}. The identities
	\(\mathcal V=V\circ Q\) and \eqref{eq:chain-gradient} give
	\[
	\|\nabla\mathcal V(\theta)\|^2
	=\nabla V(Q\theta)^TM\nabla V(Q\theta),
	\]
	and hence the reduced estimate. Positive definiteness of \(M\) gives
	strictness.
	
	For \(h<1/\Lambda\), Lemma~\ref{thm:Hessian-bound} shows that every eigenvalue of
	\(D\widetilde F_h=I_{N+1}-hD^2\mathcal V\) is positive. The induced
	torus map is therefore an orientation-preserving local diffeomorphism.
	Its image is open and compact, hence closed, so connectedness makes
	it surjective. The homotopy
	\(\theta\mapsto\theta-th\nabla\mathcal V(\theta)\), \(0\le t\le1\),
	gives degree one. A local diffeomorphism of a compact manifold is a
	covering map, and positive orientation makes this covering one-sheeted
	\cite{Lee2012SmoothManifolds}. Thus \(F_h\) is a global diffeomorphism.
	Its inverse also commutes with common-phase shifts, so it induces
	a diffeomorphism on the quotient.
\end{proof}

\begin{remark}[Common interaction law]
	\label{cor:common-law}
	If \(g_e=g\) for every edge, then \(W_\beta=\|g'\|_\infty A\), so with
	\(L_a=BAB^T\),
	\[
		\Lambda=\|g'\|_\infty\lambda_{\max}(L_a).
	\]
	In particular, \(\Lambda=(2\pi/P)\lambda_{\max}(L_a)\) for
	\(g(s)=\sin(2\pi s/P)\), and
	\(\Lambda=\lambda_{\max}(L_a)\) for the standard law \(g(s)=\sin s\).
\end{remark}

The full and reduced descent estimates use the same parameter bound:
the metric introduces no further restriction. The interval
\(0<h<1/\Lambda\) ensures both descent and invertibility, whereas
\(0<h<2/\Lambda\) ensures descent without requiring invertibility.
These bounds are optimal for sinusoidal trees
(Corollary~\ref{cor:tree-sharp}), but can be improved for other graphs or
interaction laws.

Write \(\Crit(W)=\{x:\nabla W(x)=0\}\) and \(\Fix(F)=\{x:F(x)=x\}\).
The distance \(\dist\) and measure \(\operatorname{Leb}\) are induced
by the flat coordinates on \(\TP^N\). A critical point is
non-degenerate when its Hessian is nonsingular; its Morse index
\(\operatorname{ind}_{\mathrm M}\) counts the negative Hessian
eigenvalues. A function is Morse if all its critical points are
non-degenerate \cite{Milnor1963}. At a hyperbolic fixed point, write
\(W^s(p)\) and \(W^u(p)\) for the stable and unstable manifolds.
For any fixed point, its basin is
\[
 \mathcal B(p)=\{x\in\TP^N:\bar F_h^{\,n}(x)\to p\}.
\]

\begin{proposition}[Fixed points and convergence]
	\label{thm:fixed-critical}
	\label{cor:no-periodic}
	\label{thm:approach-critical}
	\label{thm:finite-convergence}
	Let \(0<h<2/\Lambda\). Then
	\begin{equation}
		\label{eq:fixed-critical}
		\Fix(\bar F_h)=\Crit(V),
	\end{equation}
	and every periodic orbit of \(\bar F_h\) is fixed. Every forward orbit
	\(x_{n+1}=\bar F_h(x_n)\) satisfies
	\begin{equation}
		\label{eq:distance-critical}
		\nabla V(x_n)\to0,\qquad
		\dist(x_n,\Crit(V))\to0;
	\end{equation}
	in particular, every accumulation point is fixed. If \(\Crit(V)\) is
	finite, each orbit converges to one of its points.
\end{proposition}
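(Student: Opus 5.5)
The plan is to use the descent estimate \eqref{eq:reduced-descent} of Theorem~\ref{thm:finite-estimates} as the only dynamical input, together with compactness of \(\TP^N\) and continuity of \(\nabla V\).

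\emph{Fixed points equal critical points.} The formula \(\bar F_h(x)=x-hM\nabla V(x)\) on the torus means \(x\) is fixed iff \(hM\nabla V(x)\in P\mathbb Z^N\). One inclusion is immediate: if \(\nabla V(x)=0\) then \(x\) is fixed. For the converse, I would avoid the lattice issue by using descent. If \(\bar F_h(x)=x\) then \(V(\bar F_h(x))=V(x)\). Then \eqref{eq:reduced-descent} gives \(c_h\nabla V(x)^TM\nabla V(x)\le0\), and positive definiteness of \(M\) with \(c_h>0\) forces \(\nabla V(x)=0\). The descent estimate was derived on the periodic lift with the actual increment, so it holds for the torus map regardless of lattice wrapping. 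This proves \eqref{eq:fixed-critical}.

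\emph{Periodic orbits.} Let \(x_0,\dots,x_{k-1}\) be a periodic orbit. Summing \eqref{eq:reduced-descent} around it, the potential values telescope to zero, so
\[
\sum_{n=0}^{k-1}c_h\nabla V(x_n)^TM\nabla V(x_n)\le0 .
\]
Each term is nonnegative, so every \(\nabla V(x_n)=0\), and each \(x_n\) is fixed by the previous step.

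\emph{Convergence along forward orbits.} For a forward orbit, \(V(x_n)\) is nonincreasing and bounded below, since \(V\) is continuous on a compact torus. Summing \eqref{eq:reduced-descent} then gives
\[
\sum_n \nabla V(x_n)^TM\nabla V(x_n)\le \frac{V(x_0)-\min V}{c_h}<\infty ,
\]
so \(\nabla V(x_n)\to0\) because \(M\ge I_N\).

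For the distance statement I would argue by contradiction. Suppose a subsequence stays at distance at least \(\varepsilon\) from \(\Crit(V)\). By compactness it accumulates at some point \(y\), and continuity gives \(\nabla V(y)=0\), which contradicts \(\dist(y,\Crit(V))\ge\varepsilon\). The same continuity shows every accumulation point is critical, hence fixed.

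\emph{Finite critical set.} This is the step needing most care. Suppose \(\Crit(V)\) is finite, and choose \(\delta>0\) less than a third of the minimal pairwise distance between critical points. The increment satisfies \(|x_{n+1}-x_n|\le h\|M\|\,\|\nabla V(x_n)\|\to0\). So eventually every step is shorter than \(\delta\), and every \(x_n\) lies within \(\delta\) of \(\Crit(V)\).

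Eventually, then, each \(x_n\) is in the \(\delta\)-ball of a unique critical point. A step shorter than \(\delta\) cannot jump between two balls separated by more than \(\delta\). Hence the orbit stays in one ball forever, and its accumulation points, being critical, must be that single point. This gives convergence.
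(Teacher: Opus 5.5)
Your proposal is correct and follows essentially the same route as the paper: descent forces fixed points and periodic orbits to be critical, the summed descent estimate with compactness gives $\nabla V(x_n)\to0$ and $\dist(x_n,\Crit(V))\to0$, and vanishing step lengths confine the orbit to one of finitely many separated balls. Your choice of $\delta$ below a third of the minimal separation is a quantitative version of the paper's ``balls with pairwise disjoint closures'' step.
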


\begin{proof}
	Critical points are fixed by \eqref{eq:reduced-gradient-map}.
	Conversely, a fixed point leaves \(V\) unchanged, so strict descent
	forces criticality. A periodic orbit cannot contain a strict decrease
	and is therefore fixed.
	
	Summing \eqref{eq:reduced-descent} gives
	\[
	c_h\sum_{n=0}^{m-1}\nabla V(x_n)^TM\nabla V(x_n)
	\le V(x_0)-V(x_m).
	\]
	Compactness bounds the right-hand side, so positive definiteness of
	\(M\) gives \(\nabla V(x_n)\to0\). Every accumulation point is
	therefore critical. If the distance to \(\Crit(V)\) did not tend to
	zero, a convergent subsequence at a positive distance from it would
	contradict this conclusion. Also,
	\[
		\dist(x_{n+1},x_n)\le h\|M\nabla V(x_n)\|\longrightarrow0.
	\]
	When the critical set is finite, choose balls around its points with
	pairwise disjoint closures. Their mutual distances have a positive
	lower bound. Eventually the orbit lies in their union and its steps
	are smaller than this bound, so it remains in one ball. Its distance
	to the critical set tends to zero, and hence it converges to that
	ball's critical point.
\end{proof}

\begin{proposition}[Hyperbolicity and Morse index]
	\label{thm:Morse-hyperbolic}
	Let \(0<h<1/\Lambda\). Each non-degenerate critical point \(p\) of \(V\)
	is a hyperbolic fixed point of \(\bar F_h\), and
	\begin{equation}
		\label{eq:index-equality}
		\dim W^u(p)=\operatorname{ind}_{\mathrm M}(p).
	\end{equation}
	Such a point is a sink exactly when it is a strict local minimum,
	a source exactly when it is a strict local maximum, and a saddle
	otherwise.
\end{proposition}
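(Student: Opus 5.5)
The plan is to compute the derivative of \(\bar F_h\) at a critical point \(p\) and relate its spectrum to that of the Hessian of \(V\) through the metric \(M\). From \eqref{eq:reduced-gradient-map},
\[
D\bar F_h(p)=I_N-hMH,\qquad H=D^2V(p).
\]
The matrix \(MH\) is similar to the symmetric matrix \(M^{1/2}HM^{1/2}\), because \(M\) is positive definite. Hence \(D\bar F_h(p)\) is diagonalisable with real eigenvalues \(1-h\mu\), where \(\mu\) runs over the eigenvalues of \(M^{1/2}HM^{1/2}\). By Sylvester's law of inertia, \(M^{1/2}HM^{1/2}\) has the same inertia as \(H\). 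So if \(p\) is non-degenerate, no \(\mu\) vanishes, and the number of negative \(\mu\) equals \(\operatorname{ind}_{\mathrm M}(p)\).

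Next I would bound \(|\mu|\) by \(\Lambda\), which is the step that needs care. Since \(V\circ Q=\mathcal V\), differentiating twice gives \(D^2\mathcal V(\theta)=Q^THQ\) at any \(\theta\) with \(Q\theta=p\). The nonzero eigenvalues of \(M^{1/2}HM^{1/2}=(QQ^T)^{1/2}H(QQ^T)^{1/2}\) agree with those of \(HQQ^T\). These in turn agree with those of \(Q^THQ=D^2\mathcal V(\theta)\), using the fact that \(XY\) and \(YX\) share nonzero spectrum. Lemma~\ref{thm:Hessian-bound} then gives \(|\mu|\le\Lambda\). For \(0<h<1/\Lambda\) it follows that \(1-h\mu\in(0,2)\), with \(1-h\mu>1\) exactly when \(\mu<0\) and \(1-h\mu<1\) exactly when \(\mu>0\). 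Non-degeneracy excludes \(\mu=0\), so no eigenvalue equals \(1\), and none equals \(-1\) or lies on the unit circle. Thus \(p\) is hyperbolic.

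The number of eigenvalues outside the unit circle equals the number of negative \(\mu\), which is \(\operatorname{ind}_{\mathrm M}(p)\). The unstable manifold theorem for diffeomorphisms applies, since \(\bar F_h\) is a diffeomorphism by Theorem~\ref{thm:finite-estimates}. It gives \(\dim W^u(p)=\operatorname{ind}_{\mathrm M}(p)\), which is \eqref{eq:index-equality}.

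For the classification, a non-degenerate critical point is a strict local minimum iff \(H>0\), i.e.\ index \(0\). By the above this holds iff all eigenvalues of \(D\bar F_h(p)\) lie in \((0,1)\), i.e.\ iff \(p\) is a sink. Symmetrically, index \(N\) corresponds to a strict local maximum, all eigenvalues in \((1,2)\), and a source. Intermediate index gives eigenvalues on both sides of the unit circle, hence a saddle.

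The only delicate point is the spectral comparison between the reduced Hessian weighted by \(M\) and the full Hessian \(D^2\mathcal V\). I would handle it through the similarity \(MH\sim Q^THQ\) on nonzero spectrum. An alternative route is to note that \(D\widetilde F_h\) restricted to the quotient \(\R^{N+1}/\R\one\) is conjugate to \(D\bar F_h\) via \(Q\), and then use that \(\one\) is a kernel vector of \(D^2\mathcal V\).
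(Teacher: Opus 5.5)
Your proposal is correct and follows essentially the same argument as the paper: you use the similarity of $MH_p$ to $M^{1/2}H_pM^{1/2}$ to get real, nonzero eigenvalues with the inertia of $H_p$, and you identify these eigenvalues with the nonzero eigenvalues of $Q^TH_pQ=D^2\mathcal V(\theta)$ to obtain $|\mu|\le\Lambda$ from the Hessian bound. You then read off the multipliers $1-h\mu$ as the paper does. The only difference is that you spell out the step that $XY$ and $YX$ have the same nonzero spectrum, which the paper leaves implicit.
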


\begin{proof}
	At a non-degenerate critical point, put \(H_p=D^2V(p)\). Then
	\[
		D\bar F_h(p)=I_N-hMH_p.
	\]
	The matrix \(MH_p\) is similar to \(M^{1/2}H_pM^{1/2}\), which is
	symmetric and has the inertia of \(H_p\)
	\cite{HornJohnson2012}. Its eigenvalues \(\mu\) are thus real and
	nonzero. They are also the nonzero eigenvalues of
	\(D^2\mathcal V(\theta)=Q^TH_pQ\), for \(Q\theta=p\), so
	\(|\mu|\le\Lambda\) by Lemma~\ref{thm:Hessian-bound}. For \(0<h<1/\Lambda\), positive \(\mu\)
	give multipliers \(0<1-h\mu<1\), while negative \(\mu\) give
	\(1-h\mu>1\). This proves hyperbolicity, the index formula and the
	classification.
\end{proof}

\begin{remark}
	The same multiplier calculation gives \(-1<1-h\mu<1\) for positive
	\(\mu\), and \(1-h\mu>1\) for negative \(\mu\), throughout
	\(0<h<2/\Lambda\). Thus non-degenerate critical points retain
	multipliers off the unit circle in the full descent interval.
\end{remark}

\begin{theorem}[Almost-global convergence]
	\label{thm:global-dynamics}
	\label{cor:almost-everywhere}
	Let \(0<h<1/\Lambda\) and suppose that \(V\) is Morse. Then every orbit
	of \(\bar F_h\) converges to a critical point, and almost every orbit
	converges to a local minimum. If there is a unique local minimum
	\(p_*\), then
	\begin{equation}
		\label{eq:full-measure-basin}
		\operatorname{Leb}\bigl(\TP^N\setminus\mathcal B(p_*)\bigr)=0.
	\end{equation}
\end{theorem}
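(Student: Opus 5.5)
Convergence of every orbit comes directly from Proposition~\ref{thm:finite-convergence}. A Morse function on the compact torus \(\TP^N\) has isolated critical points, so \(\Crit(V)\) is finite. Since \(0<h<1/\Lambda<2/\Lambda\), every orbit of \(\bar F_h\) converges to a single critical point \(p\). This splits the torus into a disjoint union
\[
\TP^N=\bigcup_{p\in\Crit(V)}\mathcal B(p).
\]
It therefore suffices to show that \(\operatorname{Leb}(\mathcal B(p))=0\) whenever \(p\) is not a local minimum.

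Fix such a \(p\). By Proposition~\ref{thm:Morse-hyperbolic}, \(p\) is hyperbolic with \(\dim W^u(p)=\operatorname{ind}_{\mathrm M}(p)\ge1\), so \(\dim W^s(p)\le N-1\). The local stable manifold theorem gives an embedded \(C^1\) disc \(W^s_{\mathrm{loc}}(p)\) of dimension at most \(N-1\). Every orbit converging to \(p\) eventually enters and stays in \(W^s_{\mathrm{loc}}(p)\). Hence
\[
\mathcal B(p)=W^s(p)=\bigcup_{n\ge0}\bar F_h^{-n}\bigl(W^s_{\mathrm{loc}}(p)\bigr).
\]
By Theorem~\ref{thm:diffeomorphism}, \(\bar F_h\) is a smooth global diffeomorphism for \(h<1/\Lambda\), so each \(\bar F_h^{-n}\) is a diffeomorphism. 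Diffeomorphisms map Lebesgue-null sets to null sets, and a \(C^1\) submanifold of positive codimension is null. So \(\mathcal B(p)\) is a countable union of null sets and is null.

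The step needing most care is the identification of the basin with the union of backward images of the local stable manifold. Convergence to a hyperbolic fixed point forces the orbit into any prescribed small neighbourhood \(U\) of \(p\), and for all large \(n\) it stays there. The local stable manifold theorem characterises \(W^s_{\mathrm{loc}}(p)\) as the set of points whose forward orbit stays in \(U\). Invertibility is essential here: without it, preimages of a null set need not be null. This is exactly why the hypothesis is \(h<1/\Lambda\) rather than the descent interval \(h<2/\Lambda\).

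Summing over the finitely many non-minimal critical points, almost every orbit converges to a local minimum. If \(p_*\) is the unique local minimum, the complement of \(\mathcal B(p_*)\) is the finite union of null basins of the other critical points. This gives \eqref{eq:full-measure-basin}.
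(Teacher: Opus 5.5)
Your proposal is correct and follows essentially the same argument as the paper. Both proofs get convergence from finiteness of the Morse critical set together with Proposition~\ref{thm:finite-convergence}. Both then show that basins of non-minimum critical points are null: each such basin is $W^s(p)=\bigcup_{k\ge0}\bar F_h^{-k}(W^s_{\mathrm{loc}}(p))$, a countable union of positive-codimension sets, which are null because $\bar F_h$ is a global diffeomorphism for $h<1/\Lambda$.
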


\begin{proof}
	A Morse function on the compact torus has finitely many critical points,
	so every orbit converges by Proposition~\ref{thm:finite-convergence}.
	At each non-minimum critical point, the local stable manifold has
	dimension less than \(N\) by Proposition~\ref{thm:Morse-hyperbolic} and
	the stable-manifold theorem \cite{HirschPughShub1977}.
	It has zero Lebesgue measure, and global invertibility gives
	\[
	W^s(p)=\bigcup_{k\ge0}\bar F_h^{-k}
	\bigl(W^s_{\mathrm{loc}}(p)\bigr).
	\]
	Every preimage is a smooth submanifold of the same dimension and
	therefore has measure zero \cite{Lee2012SmoothManifolds}.
	The finite union over non-minimum critical points is null.
	All other initial conditions converge to local minima.
\end{proof}

Theorem~\ref{thm:global-dynamics} uses the smaller interval
\(0<h<1/\Lambda\) to ensure global invertibility. Its stable-manifold
argument is standard in gradient iteration
\cite{LeeSimchowitzJordanRecht2016,SchaefferMcCalla2020}; here the
graph Laplacian gives the required bound on \(h\).

\section{A matrix criterion for Morse--Smale dynamics}
\label{sec:morse-smale}

The potential estimates do not settle whether stable and unstable
manifolds intersect transversely.  We now prove a sufficient matrix
condition for this property.  The network consequences and the tree
counterexamples follow in Section~\ref{sec:graph-topology}.

Write $\mathbb T=\R/(2\pi\mathbb Z)$.  For vectors, sine and cosine act
componentwise:
\[
\sin z=(\sin z_1,\ldots,\sin z_N)^T,\qquad
\cos z=(\cos z_1,\ldots,\cos z_N)^T.
\]
For a symmetric matrix $K=(k_{ij})\in\R^{N\times N}$ and $h>0$, consider
\begin{equation}
	\label{eq:matrix-ms-map}
	G_{K,h}(z)=z+hK\sin z\pmod{2\pi},
	\qquad z\in\mathbb T^N.
\end{equation}
When $K$ is positive definite, $G_{K,h}$ is a gradient step of
$V_0(z)=\sum_{i=1}^N\cos z_i$ for the constant metric $K^{-1}$.  The
matrix need not arise from a graph.

A point is \emph{nonwandering} if every neighbourhood of it meets some
positive iterate of itself.  A diffeomorphism is \emph{Morse--Smale} if its
nonwandering set consists of finitely many hyperbolic periodic points and
all their stable and unstable manifolds intersect transversely: for all
such points $p,q$ and every $z\in W^u(p)\cap W^s(q)$,
\begin{equation}
	\label{eq:path-transversality}
	T_zW^u(p)+T_zW^s(q)=T_z\mathbb T^N.
\end{equation}
We say that $K$ is \emph{weakly diagonally dominant} if
\begin{equation}
	\label{eq:matrix-dominance}
	k_{ii}\ge\sum_{j\ne i}|k_{ij}|,
	\qquad i=1,\ldots,N.
\end{equation}
By Gershgorin's theorem, a symmetric matrix satisfying
\eqref{eq:matrix-dominance} is positive semidefinite; it is therefore
positive definite if and only if it is nonsingular.

\begin{theorem}[A matrix criterion for Morse--Smale dynamics]
	\label{thm:matrix-morse-smale}
	Let $K$ be symmetric, nonsingular and weakly diagonally dominant, and let
	\begin{equation}
		\label{eq:matrix-ms-range}
		0<h<\frac{1}{\lambda_{\max}(K)}.
	\end{equation}
	Then $G_{K,h}$ is a Morse--Smale diffeomorphism of $\mathbb T^N$ with
	nonwandering set $\{0,\pi\}^N$.  It has $\binom Nr$ fixed points of
	unstable dimension $r$, for $r=0,\ldots,N$, and a unique sink
	$(\pi,\ldots,\pi)$, whose basin has full Lebesgue measure. The interval
	\eqref{eq:matrix-ms-range} is optimal: for $h\ge1/\lambda_{\max}(K)$,
	$G_{K,h}$ is not a local diffeomorphism.
\end{theorem}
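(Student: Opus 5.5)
We follow the statement in order: the routine parts first, then transversality, which is the heart of the matter.

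\emph{Routine parts.} Since $K$ is symmetric, weakly diagonally dominant and nonsingular, it is positive definite. Hence $G_{K,h}$ is the gradient step $z\mapsto z-hK\nabla V_0(z)$ with $V_0(z)=\sum_i\cos z_i$. Its Jacobian is
\[
DG_{K,h}(z)=I+hK\operatorname{diag}(\cos z),
\]
which is similar to $I+hK^{1/2}\operatorname{diag}(\cos z)K^{1/2}$. The eigenvalues of $K^{1/2}\operatorname{diag}(\cos z)K^{1/2}$ lie in $[-\lambda_{\max}(K),\lambda_{\max}(K)]$, so the Jacobian is positive definite for $h<1/\lambda_{\max}(K)$.

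The degree-one and covering argument of Theorem~\ref{thm:finite-estimates}, now in the metric $K^{-1}$, then shows that $G_{K,h}$ is a global diffeomorphism.

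For optimality, take $z=(\pi,\ldots,\pi)$. There $DG_{K,h}=I-hK$, which is singular at $h=1/\lambda_{\max}(K)$. For $h>1/\lambda_{\max}(K)$, note that $\det(I-thK)$ has the wrong sign at $t=1$ and equals $1$ at $t=0$. Choose a point with $\cos z_i=-t$ for every $i$, which is possible since $t\in[0,1]$. By continuity the determinant vanishes at some such point, so $G_{K,h}$ is not a local diffeomorphism there.

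\emph{Fixed points, descent and convergence.} Nonsingularity gives $\Fix(G_{K,h})=\{\sin z=0\}=\{0,\pi\}^N$. The descent estimate, proved as in Theorem~\ref{thm:finite-estimates} with $\|D^2V_0\|\le1$ in the $K$-metric, gives strict Lyapunov decrease. As in Proposition~\ref{thm:finite-convergence}, the nonwandering set is therefore the fixed set.

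At a fixed point $p$ the Jacobian is $I+hK\operatorname{diag}(\pm1)$, with $+1$ at coordinates equal to $0$ and $-1$ at coordinates equal to $\pi$. By Proposition~\ref{thm:Morse-hyperbolic}, $p$ is hyperbolic and its unstable dimension equals the number of zero coordinates. This gives $\binom Nr$ fixed points of unstable dimension $r$, a unique sink $(\pi,\ldots,\pi)$, and a full-measure basin by Theorem~\ref{thm:global-dynamics}.

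\emph{Transversality, the main obstacle.} Suppose $T_zW^u(p)+T_zW^s(q)\ne\R^N$ at some $z\in W^u(p)\cap W^s(q)$. Then there is a nonzero covector $\xi_0$ annihilating both tangent spaces. Transport it along the orbit by the adjoint variational equation
\[
\xi_{n+1}=\bigl(DG(z_n)^{T}\bigr)^{-1}\xi_n .
\]
It is more convenient to write this as $\xi_n=(I+h\operatorname{diag}(\cos z_n)K)\xi_{n+1}$. Since $\xi_0$ annihilates $T_zW^u(p)$, the sequence $\xi_n$ decays as $n\to-\infty$. Since it annihilates $T_zW^s(q)$, it decays as $n\to+\infty$. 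The nonzero adjoint solution would thus decay at both ends, and we must rule this out. We use two ingredients.

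\emph{Slab invariance.} Each slab $\{\cos z_i\le0\}$, i.e.\ $z_i\in[\pi/2,3\pi/2]$, is forward invariant. The update of $z_i$ is $z_i+h\bigl(k_{ii}\sin z_i+\sum_{j\ne i}k_{ij}\sin z_j\bigr)$. Diagonal dominance keeps $z_i$ from leaving through the boundary, using $|\sin z_j|\le1$ and $hk_{ii}\le h\lambda_{\max}<1$. This gives monotone behaviour near $\pi$. Along any orbit from $p$ to $q$, the coordinates equal to $\pi$ at $p$ remain in their slabs. Coordinates that enter a slab never leave it, and eventually every coordinate at which $q$ equals $\pi$ is in its slab.

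\emph{Maximum principle.} Pass to the variable $\eta_n=K\xi_n$, or alternatively work with $\xi$ in weighted sup norm; I would try $\xi$ first. The aim is to show that $\max_i|\xi_{n,i}|$ over slab coordinates cannot decrease forward, by the diagonal-dominance estimate, in the spirit of a discrete parabolic maximum principle. Outside slab coordinates, the hyperbolic expansion near $q$ controls the backward behaviour. Combining this propagation with the decay at $\pm\infty$ forces $\xi\equiv0$, a contradiction.

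I expect the delicate step to be the precise choice of the variable and norm in which the maximum propagates. The difficulty lies in combining the slab coordinates with the non-slab ones so that decay at both ends is genuinely incompatible.
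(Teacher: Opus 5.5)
Your routine parts follow the paper's proof. The transversality step, however, is the substance of the theorem, and you leave it as a plan that would not close as stated. Take a coordinate in its slab, so that $c_{i,n}=\cos z_{i,n}\le0$. The adjoint equation
$\xi_{i,n}=(1+hc_{i,n}k_{ii})\xi_{i,n+1}+hc_{i,n}\sum_{\ell\ne i}k_{i\ell}\xi_{\ell,n+1}$, together with diagonal dominance, gives only $|\xi_{i,n}|\le\max_{\ell}|\xi_{\ell,n+1}|$. That maximum runs over \emph{all} coordinates at time $n+1$, because row $i$ of $K$ couples to coordinates outside the slabs, and the slab maximum does not control those. So the monotonicity you aim for, that the maximum over slab coordinates cannot decrease forward, does not follow. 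The appeal to hyperbolic expansion near $q$ for the other coordinates is not an argument. You also never explain why a coordinate carrying the maximum should lie in a slab at all.

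The missing idea is to take the maximum over space \emph{and} time, and to use one \emph{backward} step to force the maximiser into its slab.
\begin{itemize}
\item Since $\xi\ne0$ decays at both ends, $\rho=\max_{i,n}|\xi_{i,n}|>0$ is attained at some time $j$, on an index set $\mathcal I$ with signs $s_i=\xi_{i,j}/\rho$.
\item Diagonal dominance gives $s_i(K\xi_j)_i\ge0$ for $i\in\mathcal I$, with strict inequality for some $i$. Otherwise $k_{i\ell}=0$ for all $i\in\mathcal I$ and $\ell\notin\mathcal I$, so $K_{\mathcal I\mathcal I}(\xi_j)_{\mathcal I}=0$, contradicting positive definiteness.
\item For that $i$, one step backward reads $s_i\xi_{i,j-1}=\rho+hc_{i,j-1}s_i(K\xi_j)_i\le\rho$, which forces $c_{i,j-1}\le0$. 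Maximality itself places the coordinate in its slab, and slab invariance keeps it there for all $n\ge j-1$.
\item Every entry at every time is bounded by $\rho$, so the forward step gives $\rho\le\rho\bigl[1+hc_{i,j}\bigl(k_{ii}-\sum_{\ell\ne i}|k_{i\ell}|\bigr)\bigr]\le\rho$. Positivity of $1+hk_{ii}c_{i,j}$ then forces $s_i\xi_{i,j+1}=\rho$.
\item This step uses only $|\xi_{i,n}|=\rho$ and $c_{i,n}\le0$, so induction gives $|\xi_{i,n}|=\rho$ for all $n\ge j$. This contradicts decay at $+\infty$.
\end{itemize}
No change of variable to $K\xi$, no weighted norm, and no separate treatment of non-slab coordinates is needed.

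Two smaller repairs are also needed.
\begin{itemize}
\item For $h>1/\lambda_{\max}(K)$, $\det(I-hK)$ need not be negative; take $K=I_2$ and $h=3/2$. Your intermediate-value argument therefore fails as written. Argue instead with the eigenvalue $1-th\lambda_{\max}(K)$ of $I-thK$, which vanishes at $t=1/(h\lambda_{\max}(K))\in(0,1]$.
\item At a fixed point, nonsingularity alone gives only $hK\sin z\in2\pi\mathbb Z^N$. To conclude $\sin z=0$ you need the descent estimate, or the bound $|h(K\sin z)_i|\le2hk_{ii}<2$.
\end{itemize}
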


The proof uses invariant coordinate slabs and the adjoint variational
equation.  We first establish these two ingredients.

\begin{lemma}[Invariant coordinate slabs]
	\label{lem:path-slabs}
	Under the hypotheses of Theorem~\ref{thm:matrix-morse-smale}, each set
	\[
	\mathcal S_i=\{z\in\mathbb T^N:
	z_i\in[\pi/2,3\pi/2]\pmod{2\pi}\},
	\qquad i=1,\ldots,N,
	\]
	is forward invariant under $G_{K,h}$.  Equivalently, along any orbit
	$z_n=G_{K,h}^n(z_0)$, with components $z_{i,n}$,
	$\cos z_{i,n}\le0$ implies $\cos z_{i,n+1}\le0$.
\end{lemma}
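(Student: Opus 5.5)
The plan is to reduce the claim to a one-dimensional estimate on the $i$-th coordinate. The off-diagonal contributions are treated as a bounded perturbation, and diagonal dominance makes that bound exactly compensate the contraction produced by the diagonal term. Fix $i$ and a point $z$ with $\cos z_i\le0$. Choose the representative $z_i=\pi+u$ with $|u|\le\pi/2$. Then $\sin z_i=-\sin u$, and the $i$-th component of $G_{K,h}(z)$ is $\pi+u'$ with
\[
u'=u-c\sin u+h\sum_{j\ne i}k_{ij}\sin z_j,\qquad c=hk_{ii}.
\]
It suffices to show $|u'|\le\pi/2$.

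\textbf{Step 1: the size of $c$.} Since $k_{ii}=\mathrm e_i^TK\mathrm e_i\le\lambda_{\max}(K)$, and $k_{ii}\ge0$ by \eqref{eq:matrix-dominance}, the hypothesis \eqref{eq:matrix-ms-range} gives $0\le c<1$. By \eqref{eq:matrix-dominance}, the perturbation term satisfies
\[
\Bigl|h\sum_{j\ne i}k_{ij}\sin z_j\Bigr|\le h\sum_{j\ne i}|k_{ij}|\le hk_{ii}=c .
\]

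\textbf{Step 2: the diagonal part.} Study $\varphi(u)=u-c\sin u$ on $[-\pi/2,\pi/2]$. This function is odd, and $\varphi'(u)=1-c\cos u\ge1-c>0$, so $\varphi$ is increasing. Moreover $\varphi(\pi/2)=\pi/2-c$, and $\varphi(u)\ge0$ for $u\ge0$.

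\textbf{Step 3: the bound on $u'$.} By oddness it is enough to treat $u\in[0,\pi/2]$. On this range, Steps 1 and 2 give
\[
u'\le\varphi(u)+c\le\varphi(\pi/2)+c=\pi/2 ,
\qquad
u'\ge\varphi(u)-c\ge -c>-1>-\pi/2 .
\]
Hence $|u'|\le\pi/2$, that is, $\cos z_{i,n+1}\le0$. The case $u\le0$ follows by the symmetry $u\mapsto-u$, $z\mapsto-z$, which preserves the form of the estimate.

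\textbf{Main obstacle.} The only delicate point is the upper endpoint, where the bound must be sharp. There the exact loss $c=hk_{ii}$ from $\varphi(\pi/2)$ has to absorb the worst-case gain $h\sum_{j\ne i}|k_{ij}|$. This is precisely where weak diagonal dominance enters. The condition $h<1/\lambda_{\max}(K)$ is used only to guarantee $c<1$, which gives monotonicity of $\varphi$ and keeps the lower bound inside the slab.
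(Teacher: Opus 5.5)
Your proposal is correct and follows essentially the same route as the paper. The paper splits the $i$-th update into the increasing diagonal part $s\mapsto s+hk_{ii}\sin s$, which your $\varphi$ is after the shift $s=\pi+u$ and which maps $[\pi/2,3\pi/2]$ onto $[\pi/2+hk_{ii},\,3\pi/2-hk_{ii}]$, plus an off-diagonal term of size at most $hk_{ii}$ by weak diagonal dominance; your case split $u\ge0$ versus $u\le0$ is just a slightly less streamlined way of using monotonicity at both endpoints at once.
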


\begin{proof}
	Fix $i$ and let $s=z_{i,n}\in[\pi/2,3\pi/2]$.  The $i$th component of the
	update is
	\[
	s\longmapsto f(s)+hb,\qquad f(s)=s+hk_{ii}\sin s,\qquad
	b=\sum_{j\ne i}k_{ij}\sin z_{j,n},
	\]
	and $|b|\le\sum_{j\ne i}|k_{ij}|\le k_{ii}$ by
	\eqref{eq:matrix-dominance}.  Since
	$hk_{ii}\le h\lambda_{\max}(K)<1$, the function $f$ is strictly
	increasing and maps $[\pi/2,3\pi/2]$ onto
	$[\pi/2+hk_{ii},\,3\pi/2-hk_{ii}]$.  Hence
	$f(s)+hb\in[\pi/2,3\pi/2]$.
\end{proof}

We represent covectors, that is, linear functionals on tangent vectors,
by column vectors in the standard coordinates of the torus.  The
following criterion is classical \cite{Palmer1988}; we include a proof
for completeness.

\begin{lemma}[Adjoint criterion for transversality]
	\label{lem:adjoint-transversality}
	Let $G$ be a $C^1$ diffeomorphism of $\mathbb T^N$, let $p,q$ be hyperbolic
	fixed points, and let $z_n=G^n(z_0)$ with
	$z_0\in W^u(p)\cap W^s(q)$.  Set $A_n=DG(z_n)$.
	The intersection at $z_0$ is transverse if and only if the only solution of
	\begin{equation}
		\label{eq:adjoint-criterion}
		\eta_n=A_n^T\eta_{n+1},\qquad n\in\mathbb Z,
		\qquad
		\eta_n\longrightarrow0\quad\text{as }n\to\pm\infty,
	\end{equation}
	is $\eta_n=0$ for all $n$.
\end{lemma}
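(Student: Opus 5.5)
The plan is to use the standard duality between the tangent and adjoint variational equations, together with the exponential dichotomies available at the hyperbolic endpoints. Fix $z_0\in W^u(p)\cap W^s(q)$ and let $\Phi(n,m)$ denote the transition matrix of $\xi_{n+1}=A_n\xi_n$, so $\Phi(n,m)=A_{n-1}\cdots A_m$ for $n\ge m$. Invertibility of $G$ makes each $A_n$ invertible, so $\Phi(n,m)$ is defined for all $n,m$. First I will identify the tangent spaces dynamically. Since $z_n\to q$ as $n\to+\infty$ and $q$ is hyperbolic, the roughness of exponential dichotomies gives a dichotomy for the linear equation on $n\ge0$, because $A_n\to DG(q)$. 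The stable space of this dichotomy at $n=0$ is
\[
E^s_0=\{\xi_0:\Phi(n,0)\xi_0\to0\text{ as }n\to+\infty\}.
\]
By the stable manifold theorem, invariance of $W^s(q)$ and the chain rule, $E^s_0=T_{z_0}W^s(q)$. Symmetrically, $E^u_0=\{\xi_0:\Phi(n,0)\xi_0\to0\text{ as }n\to-\infty\}=T_{z_0}W^u(p)$, using a dichotomy on $n\le0$.

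Second, I will show that the set of adjoint solutions decaying at both ends, evaluated at $n=0$, equals the annihilator $(E^u_0+E^s_0)^\perp$. Every adjoint solution satisfies $\eta_n^T\xi_n=\eta_0^T\xi_0$ for all $n$ whenever $\xi$ solves the tangent equation, since $\eta_n^T\xi_n=\eta_{n+1}^TA_n\xi_n=\eta_{n+1}^T\xi_{n+1}$. Suppose $\eta_n\to0$ as $n\to+\infty$ and $\xi_0\in E^s_0$. Both factors then tend to zero, so the constant pairing is zero and $\eta_0\perp E^s_0$; the same argument at $-\infty$ gives $\eta_0\perp E^u_0$.

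The converse uses the dichotomy structure. The adjoint equation $\eta_n=A_n^T\eta_{n+1}$, run forward, is $\eta_{n+1}=A_n^{-T}\eta_n$, and it has the dual dichotomy on $n\ge0$: its projections are $I-P_n^T$, and its forward-decaying space at $0$ is $\operatorname{range}(I-P_0^T)=(\operatorname{range}P_0)^\perp=(E^s_0)^\perp$. So every $\eta_0\perp E^s_0$ generates a solution decaying as $n\to+\infty$. The dual statement on $n\le0$ shows that every $\eta_0\perp E^u_0$ generates a solution decaying as $n\to-\infty$. A single initial covector $\eta_0$ determines the solution for all $n$. Hence nonzero bi-decaying adjoint solutions exist exactly when $(E^u_0+E^s_0)^\perp\ne0$, that is, exactly when $E^u_0+E^s_0\ne\R^N$. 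This is the claimed equivalence.

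The main obstacle is establishing the exponential dichotomies on half-lines and the identification of the decaying subspace of the adjoint system with the annihilator. I would cite Palmer's roughness theorem: a small perturbation of a hyperbolic constant system on a half-line, with $A_n\to DG(q)$, retains a dichotomy. Then I would verify the dual projection identity directly from $\Phi(n,m)^{-T}$ being the adjoint transition matrix. The only care needed on the torus is working with the lift, where all $A_n$ are $N\times N$ matrices in the standard coordinates.
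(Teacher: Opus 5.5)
Your proof is correct, but it takes a different route from the paper for the key implication: that a covector annihilating $E^s_0$ generates a forward-decaying adjoint solution. The other implication, via the conserved pairing $\eta_n^T\xi_n=\eta_0^T\xi_0$, is the same in both.

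The paper straightens $W^s_{\mathrm{loc}}(q)$ to $\{u=0\}$ in $C^1$ coordinates, so that for large $n$ the derivative is block upper triangular. An annihilating covector then has the form $(0,\beta_n)^T$ and evolves by $\Upsilon_n^{-T}$ alone. This contracts in an adapted norm because $\Upsilon_n$ tends to the unstable block of $DG(q)$, and bounded coordinate changes carry the decay back to the original coordinates.

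You instead stay in the ambient coordinates along the whole orbit and obtain half-line exponential dichotomies from Palmer's roughness theorem. You then read off the decaying adjoint space directly from the dual projection, $\operatorname{range}(I-P_0^T)=(\operatorname{range}P_0)^\perp$.

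What each route buys:
\begin{itemize}
\item The paper's version is more elementary. It needs only the stable manifold theorem and an adapted norm, and the block structure makes the invariance of the annihilator explicit.
\item Your version avoids local coordinate changes and turns the duality into a one-line projection identity. The cost is citing dichotomy theory, but this is the form that generalises to nonautonomous or Fredholm settings.
\end{itemize}

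One step you only sketch is needed in either approach: identifying the decaying tangent vectors with $T_{z_0}W^s(q)$. The argument is that tangent vectors to $W^s_{\mathrm{loc}}(q)$ contract near $q$, and the dimensions agree because roughness preserves $\operatorname{rank}P_0=\dim E^s(q)$. You also need the routine extension of the dichotomy from $[n_0,\infty)$ to $[0,\infty)$, which uses invertibility of the $A_n$.
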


\begin{proof}
	Write $E^s_n=T_{z_n}W^s(q)$ and $E^u_n=T_{z_n}W^u(p)$.
	The adjoint equation transports covectors by
	$\eta_{n+1}=A_n^{-T}\eta_n$ and preserves their pairing with solutions of
	$v_{n+1}=A_nv_n$.
	
	We claim that $\eta_n\to0$ as $n\to+\infty$ precisely when $\eta_0$
	vanishes on $E^s_0$.  Choose $C^1$ local coordinates $(s,u)$ near $q$ in
	which its local stable manifold is $\{u=0\}$, using the stable-manifold
	theorem \cite{HirschPughShub1977}.  For all sufficiently large $n$, the
	derivative in these coordinates has the block form
	\[
	\widehat A_n=\begin{pmatrix}\Sigma_n&\Xi_n\\0&\Upsilon_n\end{pmatrix},
	\]
	where $\Sigma_n$ tends to the stable block and $\Upsilon_n$ to the unstable
	block of $DG(q)$.  If $\eta_0$ vanishes on $E^s_0$, invariance of the stable
	tangent spaces makes the transported covector have the form $(0,\beta_n)^T$,
	with $\beta_{n+1}=\Upsilon_n^{-T}\beta_n$.
	The limiting inverse transpose has all eigenvalues inside the unit circle.
	Choose a norm in which its operator norm is less than one.  A uniform
	bound less than one then holds for all sufficiently large $n$.
	The coordinate changes and their inverses have bounded derivatives along
	the convergent orbit, so $\eta_n$ decays exponentially in the original
	coordinates too.
	Conversely, stable tangent vectors evolve through $\Sigma_n$; the same norm
	argument gives their exponential decay.  If $\eta_n\to0$, the constant
	pairing $\eta_n^Tv_n=\eta_0^Tv_0$ must therefore vanish, so $\eta_0$
	vanishes on $E^s_0$.
	
	Applying the same argument to $G^{-1}$ at $p$ identifies backward decay
	with vanishing on $E^u_0$.  Solutions of \eqref{eq:adjoint-criterion}
	therefore correspond exactly to covectors vanishing on $E^s_0 + E^u_0$.
	A nonzero such covector exists precisely when the intersection is not
	transverse.
\end{proof}

\begin{proof}[Proof of Theorem~\ref{thm:matrix-morse-smale}]
	By the observation preceding the theorem, $K$ is positive definite.  Put
	$\lambda=\lambda_{\max}(K)$ and
	$D(z)=\operatorname{diag}(\cos z_1,\ldots,\cos z_N)$.
	The derivative $I_N+hKD(z)$ is similar to the symmetric matrix
	$I_N+hK^{1/2}D(z)K^{1/2}$, whose eigenvalues lie in
	$[1-h\lambda,1+h\lambda]\subset(0,2)$.  Thus $G_{K,h}$ is a local
	diffeomorphism.  It is homotopic to the identity through
	$z\mapsto z+thK\sin z$, $0\le t\le1$, and hence has degree one.
	Compactness and positivity of the Jacobian determinant make it a
	one-sheeted covering, so it is a global diffeomorphism. 
	
	The interval \eqref{eq:matrix-ms-range} cannot be enlarged.  Consider the
	diagonal points $z(t)=t\one_N$, $t\in[0,\pi]$, where
	$D(z(t))=\cos t\,I_N$ and hence $DG_{K,h}(z(t))=I_N+h\cos t\,K$.  If
	$Kv=\lambda v$ with $v\neq0$, then $DG_{K,h}(z(t))\,v=(1+h\lambda\cos t)\,v$
	for every $t$.  If $h\lambda\ge1$, the number
	$t_*=\arccos\bigl(-1/(h\lambda)\bigr)$ lies in $(\pi/2,\pi]$, and
	$DG_{K,h}(z(t_*))\,v=0$, so $G_{K,h}$ is not a local diffeomorphism.
	
	Since $\|D^2V_0\|_2\le1$ and $\|Kv\|^2\le\lambda\,v^TKv$ for
	$v\in\R^N$, Taylor's inequality on a real lift gives
	\begin{equation}
		\label{eq:matrix-ms-descent}
		V_0(G_{K,h}(z))-V_0(z)
		\le-h\left(1-\frac{h\lambda}{2}\right)
		(\sin z)^T K\sin z.
	\end{equation}
	Therefore $V_0$ decreases strictly unless $\sin z=0$, and the fixed points
	are exactly $\{0,\pi\}^N$.
	At a fixed point $p$, the matrix $KD(p)$ is similar to
	$K^{1/2}D(p)K^{1/2}$.  By Sylvester's law of inertia, the number of
	positive eigenvalues equals the number of zero coordinates of $p$.
	All eigenvalues $\mu$ are nonzero and satisfy $|\mu|\le\lambda$, so the
	multipliers $1+h\mu$ lie off the unit circle: positive $\mu$ give
	unstable directions and negative $\mu$ stable ones.  The fixed points
	are therefore hyperbolic, with unstable dimension equal to their number
	of zero coordinates.
	
	For a non-fixed point, continuity gives a neighbourhood $U$ such that
	\[
	\sup_{y\in U}V_0(G_{K,h}(y))<\inf_{y\in U}V_0(y).
	\]
	Monotonicity then gives $G_{K,h}^k(U)\cap U=\varnothing$ for every $k\ge1$.
	Hence the nonwandering set is the stated finite fixed set.
	
	It remains to prove transversality.  The idea is to follow the largest
	coordinate of a decaying adjoint solution: diagonal dominance forces
	that coordinate into an invariant slab, where the adjoint equation
	cannot decrease it, which contradicts decay.
	
	Suppose that an intersection is not transverse.  Along its orbit
	$(z_n)_{n\in\mathbb Z}$, Lemma~\ref{lem:adjoint-transversality} supplies
	a nonzero solution tending to zero at both ends, with
	\[
		\eta_n=(I_N+hD_nK)\eta_{n+1},
		\qquad D_n=D(z_n).
	\]
	The maximum
	\[
		\rho=\max_{1\le i\le N,\ n\in\mathbb Z}|\eta_{i,n}|>0
	\]
	is attained.  Fix a time $j$ at which it is attained, and let
	$\mathcal I=\{i:|\eta_{i,j}|=\rho\}$.
	For $i\in\mathcal I$, put $s_i=\eta_{i,j}/\rho\in\{-1,1\}$.
	Diagonal dominance gives
	\[
	s_i(K\eta_j)_i
	\ge\rho\left(k_{ii}-\sum_{\ell\ne i}|k_{i\ell}|\right)\ge0.
	\]
	For at least one $i\in\mathcal I$ the left side is strictly positive.
	Otherwise, equality forces $k_{i\ell}=0$ for every
	$i\in\mathcal I$ and $\ell\notin\mathcal I$: a nonzero such entry
	would multiply a coordinate of absolute value strictly less than $\rho$.
	It would follow that
	$K_{\mathcal I\mathcal I}(\eta_j)_{\mathcal I}=0$,
	contradicting positive definiteness of this principal submatrix.
	
	Choose such an index $i$ and put $s=s_i$.
	Write $c_{i,n}=\cos z_{i,n}$.  The adjoint equation at time $j-1$ gives
	\[
	s\eta_{i,j-1}
	=\rho+hc_{i,j-1}s(K\eta_j)_i\le\rho,
	\]
	so $c_{i,j-1}\le0$.  Lemma~\ref{lem:path-slabs} implies
	$c_{i,n}\le0$ for every $n\ge j-1$.
	Set $c=c_{i,j}\le0$.  The diagonal coefficient $1+hk_{ii}c$ is strictly
	positive, and the adjoint equation at time $j$ gives
	\begin{align*}
		\rho
		&=(1+hk_{ii}c)s\eta_{i,j+1}
		+hc\sum_{\ell\ne i}k_{i\ell}s\eta_{\ell,j+1}\\
		&\le\rho\left[1+hc
		\left(k_{ii}-\sum_{\ell\ne i}|k_{i\ell}|\right)\right]
		\le\rho.
	\end{align*}
	Equality and the strictly positive diagonal coefficient force
	$\eta_{i,j+1}=s\rho$.  This step uses only $|\eta_{i,n}|=\rho$ and
	$c_{i,n}\le0$, so repeating it forces $\eta_{i,n}=s\rho$ for all
	$n\ge j$, contrary to decay.  This proves transversality.
	
	Finally, \eqref{eq:matrix-ms-descent}, compactness and finiteness of the
	critical set give convergence of every orbit to one fixed point by the
	argument of Proposition~\ref{thm:finite-convergence}.
	The unique minimum is $(\pi,\ldots,\pi)$.  All other fixed points have
	stable manifolds of dimension less than $N$, so their basins have measure
	zero, as in Theorem~\ref{thm:global-dynamics}.
\end{proof}

\begin{remark}
	\label{prop:sharpK}
	The optimality argument applies to every matrix $K$ similar to a
	symmetric one with $\lambda_{\max}(K)>0$: $G_{K,h}$ is not a local
	diffeomorphism when $h\ge1/\lambda_{\max}(K)$.
\end{remark}

\begin{remark}
	\label{rem:sink-threshold}
	Under the hypotheses of Theorem~\ref{thm:matrix-morse-smale}, for
	$h=1/\lambda_{\max}(K)$ the singular point found in the proof is the sink
	$(\pi,\ldots,\pi)$ itself, where $DG_{K,h}(\pi,  \ldots \pi)=I_N-hK$.  For
	$h>2/\lambda_{\max}(K)$ this point has the multiplier
	$1-h\lambda_{\max}(K)<-1$ and is no longer a sink.
\end{remark}

For a tree with positive weights $a_e$, the reduced map in edge
coordinates is $z\mapsto z+hB^TBA\sin z$
(Proposition~\ref{thm:general-tree-factorisation}), which has the form
$G_{K,h}$ only for equal weights.  We therefore extend
Theorem~\ref{thm:matrix-morse-smale} to maps
\[
G_{K,A,h}(z)=z+hKA\sin z\pmod{2\pi},\qquad
A=\operatorname{diag}(a_1,\ldots,a_N),\quad a_i>0.
\]

\begin{theorem}[Weighted matrix criterion]
	\label{thm:weighted}
	Let $K$ be symmetric, nonsingular and weakly diagonally dominant, and
	suppose that
	\begin{equation}
		\label{eq:weighted-dominance}
		k_{ii}a_i\ge\sum_{j\ne i}|k_{ij}|a_j,\qquad i=1,\ldots,N.
	\end{equation}
	Put $\tilde\lambda=\lambda_{\max}(A^{1/2}KA^{1/2})$.  For
	$0<h<1/\tilde\lambda$, the map $G_{K,A,h}$ is a Morse--Smale
	diffeomorphism with the nonwandering set, unstable dimensions and
	full-measure sink basin of Theorem~\ref{thm:matrix-morse-smale}.
\end{theorem}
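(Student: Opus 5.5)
The plan is to follow the proof of Theorem~\ref{thm:matrix-morse-smale} step by step, replacing $K$ by $KA$ and tracking where the weights enter. The natural symmetrisation is $KA=A^{-1/2}(A^{1/2}KA^{1/2})A^{1/2}$. Likewise $KAD(z)$ is similar to $A^{1/2}KA^{1/2}\cdot D(z)$ after conjugation by $A^{1/2}$, because $A$ and $D$ commute. Moreover $A^{1/2}KA^{1/2}D$ is similar to $\tilde K^{1/2}D\tilde K^{1/2}$, where $\tilde K=A^{1/2}KA^{1/2}$ is positive definite since $K$ is.

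\textbf{Diffeomorphism, fixed points and nonwandering set.} The derivative $I+hKAD(z)$ has eigenvalues in $[1-h\tilde\lambda,1+h\tilde\lambda]\subset(0,2)$. The degree and covering argument then gives a global diffeomorphism. For descent, use the metric $K^{-1}$ and the weighted potential $V_A(z)=\sum_i a_i\cos z_i$. Its gradient is $-A\sin z$, so $G_{K,A,h}(z)=z-hK\nabla V_A$, and the Hessian is bounded by $A$. Taylor's inequality gives
\[
V_A(G(z))-V_A(z)\le -h\,w^T\Bigl(I-\tfrac h2\,KAK^{-1}\cdots\Bigr)w,
\]
where it is cleaner to write $w=A^{1/2}\sin z$. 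The first-order term is $-h\,w^T\tilde K w$. The second-order term is at most $\tfrac{h^2}{2}(KA\sin z)^TA(KA\sin z)=\tfrac{h^2}{2}w^T\tilde K^2w\le\tfrac{h^2\tilde\lambda}{2}w^T\tilde Kw$. This yields strict descent for $h<2/\tilde\lambda$. Hence the fixed points are $\{0,\pi\}^N$ and the nonwandering set is this finite set.

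\textbf{Hyperbolicity.} At a fixed point, $KAD(p)$ is similar to $\tilde K^{1/2}D(p)\tilde K^{1/2}$. Sylvester's law then gives unstable dimension equal to the number of zero coordinates, so there are $\binom Nr$ fixed points of unstable dimension $r$. The eigenvalue bound $|\mu|\le\tilde\lambda$ keeps all multipliers off the unit circle. Convergence and the full-measure basin of $(\pi,\dots,\pi)$ follow exactly as before.

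\textbf{Slab invariance.} The $i$th update is $s\mapsto s+hk_{ii}a_i\sin s+h\sum_{j\ne i}k_{ij}a_j\sin z_j$. The weighted dominance condition \eqref{eq:weighted-dominance} bounds the perturbation by $k_{ii}a_i$, and $hk_{ii}a_i=h\tilde K_{ii}\le h\tilde\lambda<1$. The argument of Lemma~\ref{lem:path-slabs} therefore goes through verbatim.

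\textbf{Transversality (main obstacle).} The adjoint equation is now $\eta_n=(I+hD_nAK)\eta_{n+1}$, and its $i$th row involves $a_i c_{i,n}(K\eta_{n+1})_i$. The positive factor $a_i$ does not affect signs. The maximum argument then needs only two facts. The first is $s_i(K\eta_j)_i\ge\rho(k_{ii}-\sum_{\ell\ne i}|k_{i\ell}|)\ge0$, which is plain weak diagonal dominance of $K$; this is why that hypothesis is kept. The second is strict positivity for some $i\in\mathcal I$, which uses positive definiteness of $K_{\mathcal I\mathcal I}$. The propagation step reads $\rho\le\rho[1+hca_i(k_{ii}-\sum_{\ell\ne i}|k_{i\ell}|)]\le\rho$, with diagonal coefficient $1+hca_ik_{ii}>0$ since $ha_ik_{ii}<1$. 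The remaining check is that the slab entry $c_{i,j-1}\le0$ and its forward propagation use the weighted dominance. Slab invariance for the primal orbit needs \eqref{eq:weighted-dominance}, while the adjoint maximum principle needs unweighted dominance. This is the point to verify carefully, but both hypotheses are assumed, so the contradiction with decay follows as in the unweighted proof.
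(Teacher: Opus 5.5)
Your proposal is correct and follows the paper's proof essentially step for step: conjugating by $A^{1/2}$ to reduce the derivative bounds and hyperbolicity to $\tilde K$, using the weighted potential $V_A$ for descent (your bound $w^T\tilde K^2w\le\tilde\lambda\,w^T\tilde Kw$ with $w=A^{1/2}\sin z$ is equivalent to the paper's $u^TKAKu\le\tilde\lambda\,u^TKu$ with $u=A\sin z$), invoking the weighted dominance condition for slab invariance, and running the unchanged adjoint maximum argument with unweighted dominance, positive definiteness of $K$ and the positive diagonal coefficient $1+ha_ik_{ii}c$. Two minor points: delete the stray incomplete formula involving $KAK^{-1}$, and note that deriving $c_{i,j-1}\le0$ needs only $a_i>0$ (weighted dominance enters only in its forward propagation via the slabs).
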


For $A=I_N$, condition \eqref{eq:weighted-dominance} is
\eqref{eq:matrix-dominance} and the theorem is
Theorem~\ref{thm:matrix-morse-smale}.  The translation $\tau$ of
Remark~\ref{rem:sinusoidal-sign-conjugacy} below conjugates $G_{K,A,h}$ to
$z\mapsto z-hKA\sin z$, so the theorem also covers the opposite sign.

\begin{proof}
	Put $\tilde K=A^{1/2}KA^{1/2}$, which is positive definite since $K$ is.
	   As $A$ and $D(z)=\operatorname{diag}(\cos z_1,\ldots,\cos z_N)$ are diagonal, $A^{1/2}KAD(z)A^{-1/2}=\tilde KD(z)$.
	Replacing $K$ by $\tilde K$ in the proof of
	Theorem~\ref{thm:matrix-morse-smale} therefore shows that
	$DG_{K,A,h}=I_N+hKAD(z)$ has eigenvalues in
	$[1-h\tilde\lambda,1+h\tilde\lambda]\subset(0,2)$, so that $G_{K,A,h}$
	is a diffeomorphism, and that each point of $\{0,\pi\}^N$ is hyperbolic
	with unstable dimension equal to its number of zero coordinates.  With
	$V_A(z)=\sum_ia_i\cos z_i$ and $u=A\sin z$, the bound
	$u^TKAKu\le\tilde\lambda\,u^TKu$ and Taylor's inequality give
	\[
	V_A(G_{K,A,h}(z))-V_A(z)\le
	-h\Bigl(1-\frac{h\tilde\lambda}{2}\Bigr)u^TKu,
	\]
	which replaces \eqref{eq:matrix-ms-descent}; the fixed points, the
	nonwandering set and the basin follow as before.
	
	The $i$th component of the map is $z_i+hk_{ii}a_i\sin z_i+hb$ with
	$|b|\le k_{ii}a_i$ by \eqref{eq:weighted-dominance}, and
	$hk_{ii}a_i=h\tilde K_{ii}<1$.  Hence the proof of
	Lemma~\ref{lem:path-slabs} applies, and the slabs $\mathcal S_i$ are
	forward invariant.  The adjoint equation is now
	$\eta_n=(I_N+hD_nAK)\eta_{n+1}$: its $i$th row is that of the proof of
	Theorem~\ref{thm:matrix-morse-smale}, with $hc_{i,n}$ multiplied by
	$a_i>0$.  The maximum argument uses \eqref{eq:matrix-dominance} for $K$,
	positive definiteness of $K$, and positivity of the diagonal coefficient
	$1+ha_ik_{ii}c$, so it applies unchanged and proves transversality.
\end{proof}

\section{Network geometry and structural stability}
\label{sec:graph-topology}
\label{sec:sinusoidal-networks}

On trees, edge differences are independent coordinates and the critical
points can be classified directly. For unit-weight paths and the star with three leaves,
Theorem~\ref{thm:matrix-morse-smale} also gives transversality, and
Theorem~\ref{thm:weighted} extends this to paths with discretely concave
weights.  We first establish these network cases, then consider
their perturbations and the limits of the criterion.

\subsection{Trees and separated potentials}
\label{subsec:general-trees}

Let $T=(\mathsf V_T,\mathsf E_T)$ be a tree with $N+1$ vertices, positive
weights $a_e$ and smooth $P$-periodic odd edge laws $g_e$, with edge
potentials $\Phi_e$ as in \eqref{eq:Phi-def}.  A tree has exactly $N$
edges, so the edge differences $z=B^T\theta$ lie in $\TP^N$.  We write
\begin{equation}
	V_T(z)=\sum_{e\in\mathsf E_T}a_e\Phi_e(z_e),
	\qquad K_T=B^TB.
	\label{eq:general-tree-potential}
\end{equation}

\begin{proposition}[Critical points on trees]
	\label{thm:general-tree-factorisation}
	The edge differences $z=B^T\theta$ identify the common-phase quotient
	with $\TP^N$.  In these coordinates the reduced potential is $V_T$, and
	the reduced map is
	\begin{equation}
		z\longmapsto z-hK_T\nabla V_T(z)\pmod P.
		\label{eq:general-tree-map}
	\end{equation}
	Its critical set is
	\begin{equation}
		\Crit(V_T)=\prod_{e\in\mathsf E_T}\Crit(\Phi_e).
		\label{eq:general-tree-critical}
	\end{equation}
	If every $\Phi_e$ is Morse, then $V_T$ is Morse, with
	\begin{equation}
		\operatorname{ind}_{\mathrm M}(z)
		=\#\{e\in\mathsf E_T:\Phi_e''(z_e)<0\}.
		\label{eq:general-tree-index}
	\end{equation}
	If, moreover, each $\Phi_e$ has a unique local minimum $m_e$ modulo $P$,
	then $m=(m_e)_{e\in\mathsf E_T}$ is the unique local and absolute minimum
	of $V_T$ and, for $0<h<1/\Lambda$, the unique sink of
	\eqref{eq:general-tree-map}, with a basin of full Lebesgue measure.
\end{proposition}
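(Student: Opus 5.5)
The plan is to first show that for a tree the map $\theta\mapsto B^T\theta$ from $\TP^{N+1}$ to $\TP^N$ has kernel exactly the common-phase shifts and is onto. Then I rewrite the network map in these coordinates and read off the critical structure from the fact that the potential separates.

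\textbf{Step 1: coordinates.} For a connected graph, $\ker B^T=\operatorname{span}\{\one\}$ over $\R$. A tree has $N$ edges, so $B^T$ has rank $N$ and is onto $\R^N$. On tori I also need the integer statement: every $z\in\TP^N$ is realised, and $B^T\theta\equiv0\pmod P$ forces $\theta\in\R\one$ modulo $P\mathbb Z^{N+1}$. I will get both by integrating along the tree from a root. Fix $\theta_{\mathrm{root}}$ and define each other phase recursively along the unique path to the root, adding or subtracting $z_e$ according to orientation. This gives a bijection between $\TP^N$ and the quotient $\TP^{N+1}/\TP$. Equivalently, $C=B^TS$ is unimodular: deleting the root row from $B$ leaves a square totally unimodular matrix of determinant $\pm1$.

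\textbf{Step 2: the map.} Set $z=B^T\theta$ and apply $B^T$ to \eqref{eq:network}. This gives $z\mapsto z+hB^TBA\mathbf g(z)$. By \eqref{eq:full-gradient-components}, $\nabla V_T(z)=-A\mathbf g(z)$, so the map is \eqref{eq:general-tree-map} with $K_T=B^TB$. The reduced potential is $V_T$ because $\mathcal V(\theta)=\Psi(B^T\theta)$ and $\Psi=V_T$ here.

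\textbf{Step 3: critical points.} Since $V_T$ is a sum of functions of separate coordinates, $\nabla V_T=(a_e\Phi_e'(z_e))_e$ and $D^2V_T=\operatorname{diag}(a_e\Phi_e''(z_e))$ with $a_e>0$. This gives \eqref{eq:general-tree-critical}, the Morse property and the index formula \eqref{eq:general-tree-index} directly. Index is invariant under the linear coordinate change, so it agrees with the index in the $x$ coordinates of Proposition~\ref{thm:reduced-gradient}.

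\textbf{Step 4: the minimum.} If each $\Phi_e$ has a unique local minimum $m_e$, then a local minimum of $V_T$ needs every coordinate at a local minimum of $\Phi_e$, so $m$ is the unique one. Being a continuous function on a compact set, $V_T$ attains its minimum, hence $m$ is also the global minimum.

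\textbf{Step 5: dynamics.} Each $\Phi_e$ is a smooth periodic function with finitely many critical points (only a unique minimum is assumed), but this does not give Morse. I will handle this carefully: the sink statement should be read under the Morse hypothesis of the preceding sentence. Then, since the tree map is linearly conjugate to $\bar F_h$, Proposition~\ref{thm:Morse-hyperbolic} makes $m$ the unique sink, and Theorem~\ref{thm:global-dynamics} gives a basin of full measure; the linear conjugacy preserves Lebesgue null sets.

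I expect the main obstacle to be the integer lattice issue in Step 1, namely that the edge coordinates give a diffeomorphism of tori rather than a finite covering. Unimodularity of the reduced incidence matrix of a tree settles it.
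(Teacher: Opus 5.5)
Your proposal is correct and follows the paper's proof essentially step by step. You root the tree to get the bijection and the unimodularity of $C=B^TS$, apply $B^T$ to the network map, read the critical set, Morse property and index off the diagonal Hessian $\operatorname{diag}(a_e\Phi_e''(z_e))$, and transport Theorem~\ref{thm:global-dynamics} through the automorphism $z=Cx$. Reading the sink claim under the Morse hypothesis is the intended reading (that is what ``moreover'' signals). Your parenthetical claim that each $\Phi_e$ has finitely many critical points does not follow from uniqueness of the minimum alone, but you never use it, and it holds anyway once $\Phi_e$ is Morse.
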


\begin{proof}
	Fix the phase of vertex $N+1$.  Every other vertex is joined to its
	parent by a unique edge, and its phase is recovered from its parent's by
	adding or subtracting that edge difference.  Hence $\theta\mapsto
	B^T\theta$ induces a bijection from the quotient onto $\TP^N$.  Since the
	inverse uses only additions and subtractions, the matrix $C=B^TS$ of
	\eqref{eq:reduced-matrices} has an integer inverse, so $\det C=\pm1$ (that is, $C$ is unimodular) and
	$z=Cx$ is a linear automorphism of $\TP^N$ that preserves Lebesgue
	measure.  This proves the coordinate claim, and $V_T(B^T\theta)
	=\mathcal V(\theta)$ gives the potential.  Applying $B^T$ to
	\eqref{eq:network} gives the reduced map.  The Hessian of $V_T$ is
	diagonal, with entries $a_e\Phi_e''(z_e)$, so the critical set, indices
	and minima follow coordinate by coordinate.  The dynamical conclusion is
	Theorem~\ref{thm:global-dynamics}, transported to the $z$ coordinates by
	the automorphism $z=Cx$.
\end{proof}

Thus the potential separates, while the dynamics remains coupled
through $K_T$.

For the sinusoidal law $g(s)=\sin s$, take $\Phi(s)=\cos s$ and
$P=2\pi$.  On any positive-weight graph, the potential and spectral bound
become
\begin{equation}
	\mathcal V(\theta)=\sum_{e\in\mathcal E}a_e\cos(b_e^T\theta),
	\qquad \Lambda=\lambda_{\max}(L_a),\qquad L_a=BAB^T,
	\label{eq:sin-general-potential}
\end{equation}
by Remark~\ref{cor:common-law}, so Theorem~\ref{thm:finite-estimates}
gives strict descent for $0<h<2/\lambda_{\max}(L_a)$ and invertibility
for $0<h<1/\lambda_{\max}(L_a)$.

\begin{corollary}[Sinusoidal trees]
	\label{thm:tree}
	For a positive-weight tree with sinusoidal law, the reduced potential
	\begin{equation}
		V_T(z)=\sum_{e\in\mathsf E_T}a_e\cos z_e,
		\qquad \Crit(V_T)=\{0,\pi\}^N,
		\label{eq:tree-edge-coordinates}
	\end{equation}
	is Morse.  A critical point with $r$ zero coordinates has index $r$, so
	there are $\binom Nr$ points of each index.  The unique minimum is the
	phase-opposition state
	\begin{equation}
		z_e=\pi\quad\text{for every edge }e,
		\label{eq:tree-opposition}
	\end{equation}
	which is the unique sink, with a basin of full measure, whenever
	\begin{equation}
		0<h<\frac1{\lambda_{\max}(L_a)}.
		\label{eq:tree-diffeo}
	\end{equation}
\end{corollary}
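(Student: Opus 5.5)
This corollary is the specialisation of Proposition~\ref{thm:general-tree-factorisation} to $g_e=\sin$, $\Phi_e=\cos$, $P=2\pi$. I will verify its hypotheses edge by edge and then read off the conclusions.

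First, in the edge coordinates $z=B^T\theta$, Proposition~\ref{thm:general-tree-factorisation} gives the reduced potential $V_T(z)=\sum_e a_e\Phi_e(z_e)=\sum_e a_e\cos z_e$. Its critical set is the product of the sets $\Crit(\cos)=\{0,\pi\}$ modulo $2\pi$, which is $\{0,\pi\}^N$. Each $\Phi_e=\cos$ is Morse on the circle, since $\Phi_e''(0)=-1\ne0$ and $\Phi_e''(\pi)=1\ne0$. Hence $V_T$ is Morse, and by \eqref{eq:general-tree-index} the index of a critical point is the number of edges with $\cos z_e=1$, that is, the number $r$ of zero coordinates. Choosing which $r$ of the $N$ coordinates vanish gives $\binom Nr$ critical points of index $r$.

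Second, $\cos$ has the unique local minimum $\pi$ modulo $2\pi$. Proposition~\ref{thm:general-tree-factorisation} then shows that $(\pi,\ldots,\pi)$ is the unique local and absolute minimum of $V_T$, and that for $0<h<1/\Lambda$ it is the unique sink, with a basin of full measure. In the original variables this point is the phase-opposition state \eqref{eq:tree-opposition}.

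Third, I identify $\Lambda$. For the common law $g=\sin$ we have $\|g'\|_\infty=1$, so Remark~\ref{cor:common-law} (equivalently \eqref{eq:sin-general-potential}) gives $\Lambda=\lambda_{\max}(L_a)$. The admissible range $0<h<1/\Lambda$ is therefore exactly \eqref{eq:tree-diffeo}.

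The only step that needs care is bookkeeping rather than analysis. The full-measure statement is proved in Theorem~\ref{thm:global-dynamics} in the $x$ coordinates, and it must be transported to the $z$ coordinates. This is already done in Proposition~\ref{thm:general-tree-factorisation} through the unimodular automorphism $z=Cx$, which preserves Lebesgue measure, so no new argument is needed.
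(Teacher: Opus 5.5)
Your proposal is correct and follows the same route as the paper. Both apply Proposition~\ref{thm:general-tree-factorisation} with $\Phi_e=\cos$, use $\Phi_e''(0)=-1$ and $\Phi_e''(\pi)=1$ for the Morse property, the index count and the unique minimum, and identify $\Lambda=\lambda_{\max}(L_a)$ via \eqref{eq:sin-general-potential}.
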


\begin{proof}
	Apply Proposition~\ref{thm:general-tree-factorisation} to
	$\Phi_e=\cos$, whose critical points $0$ and $\pi$ have second
	derivatives $-1$ and $1$, respectively; here
	$\Lambda=\lambda_{\max}(L_a)$ by \eqref{eq:sin-general-potential}.
\end{proof}

\begin{corollary}[Optimal step sizes on trees]
	\label{cor:tree-sharp}
	For a sinusoidal tree with positive weights, both bounds of
	Theorem~\ref{thm:finite-estimates} are optimal.  For $h\ge1/\Lambda$ the
	reduced map is not a local diffeomorphism.  For $h>2/\Lambda$ phase
	opposition is not a sink, and strict descent fails near it.
\end{corollary}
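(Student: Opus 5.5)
The plan is to work in the edge coordinates of Proposition~\ref{thm:general-tree-factorisation}, where the reduced map is $z\mapsto z+hK_TA\sin z$ with $K_T=B^TB$ and $A=\operatorname{diag}(a_e)$. The Jacobian is $I_N+hK_TAD(z)$, with $D(z)=\operatorname{diag}(\cos z_e)$. The first step is to identify the relevant spectrum. Since $B$ has full column rank for a tree, $K_TA$ is similar to $A^{1/2}B^TBA^{1/2}$. This matrix has the same nonzero eigenvalues as $BAB^T=L_a$, so $\lambda_{\max}(A^{1/2}K_TA^{1/2})=\lambda_{\max}(L_a)=\Lambda$. Note that $K_TA$ is similar to a symmetric matrix with positive top eigenvalue.

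For the first claim, I would apply the diagonal argument of Theorem~\ref{thm:matrix-morse-smale} and Remark~\ref{prop:sharpK} directly to $K_TA$. On the points $z(t)=t\one_N$ the Jacobian equals $I_N+h\cos t\,K_TA$. Take $v$ with $K_TAv=\Lambda v$. If $h\Lambda\ge1$, then $t_*=\arccos(-1/(h\Lambda))$ makes the Jacobian annihilate $v$, so the map is not a local diffeomorphism. This shows the invertibility bound $1/\Lambda$ of Theorem~\ref{thm:finite-estimates} cannot be enlarged.

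For the second claim, I would look at phase opposition $m=(\pi,\dots,\pi)$, where the Jacobian is $I_N-hK_TA$. Its multipliers are $1-h\mu$, with $\mu$ ranging over the eigenvalues of $K_TA$, all in $(0,\Lambda]$. When $h>2/\Lambda$, the multiplier $1-h\Lambda<-1$ appears, so $m$ is not a sink, as in Remark~\ref{rem:sink-threshold}.

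For the failure of descent, I would take $z=m+\varepsilon w$ with $w$ a suitable eigendirection, and expand $V_T$ to second order near $m$. The Hessian of $V_T$ at $m$ is $A$, and $\nabla V_T(m+\varepsilon w)\approx\varepsilon Aw$. The step is $-hK_TA\varepsilon w+O(\varepsilon^2)$, hence
\[
V_T(\text{image})-V_T(m)\approx\tfrac{\varepsilon^2}{2}\,w^T(I-hAK_T)A(I-hK_TA)w,
\]
compared with $\tfrac{\varepsilon^2}{2}w^TAw$. I would choose $w$ with $K_TAw=\Lambda w$. Then the image quadratic form equals $(1-h\Lambda)^2\,w^TAw\cdot\tfrac{\varepsilon^2}{2}$. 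For $h>2/\Lambda$ we have $(1-h\Lambda)^2>1$, so $V_T$ increases along this direction for small $\varepsilon$, and strict descent fails near $m$.

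The only delicate point is checking that the quadratic form computation uses the correct adjoint relation, namely that $A K_T A$-type symmetrisation makes $w^TAw>0$ the relevant weight. This follows because $w$ is an eigenvector of the $A$-self-adjoint operator $K_TA$. The $O(\varepsilon^3)$ remainder is then dominated.
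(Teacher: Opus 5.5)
Your proposal is correct and follows the paper's proof essentially step for step: you use the edge coordinates of Proposition~\ref{thm:general-tree-factorisation}, the similarity of $K_TA$ to $A^{1/2}B^TBA^{1/2}$ to obtain $\lambda_{\max}(K_TA)=\Lambda$, and the diagonal argument of Remark~\ref{prop:sharpK} for the first claim; for the second you use the multiplier $1-h\Lambda<-1$ at phase opposition and a second-order expansion of $V_T$ (with Hessian $A$) along the top eigendirection. Your closing worry about the adjoint is unnecessary, because $(I_N-hK_TA)w=(1-h\Lambda)w$ gives the factor $(1-h\Lambda)^2\,w^TAw$ directly, with $w$ real and $w^TAw>0$.
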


\begin{proof}
	By Proposition~\ref{thm:general-tree-factorisation}, the reduced map is
	$z\mapsto z+hB^TBA\sin z$.  Since $B$ has full column rank, $B^TBA$ is
	similar to the positive definite matrix $A^{1/2}B^TBA^{1/2}$, so
	$\lambda_{\max}(B^TBA)=\lambda_{\max}(L_a)=\Lambda>0$, and
	Remark~\ref{prop:sharpK} gives the first claim.  At $(\pi,\ldots,\pi)$
	the derivative is $I_N-hB^TBA$, with multiplier $1-h\Lambda<-1$ for
	$h>2/\Lambda$; a second-order expansion of $V_T$, whose Hessian there
	is $A$, shows that $V_T$ increases along the corresponding eigendirection.
\end{proof}

Identifying phases $0,\pi$ with spins $\sigma_i=+1,-1$ gives
$\cos(\theta_i-\theta_j)=\sigma_i\sigma_j$.  The restricted potential is
therefore the positive-weight antiferromagnetic Ising energy
\cite{Barahona1982}, whose minimisation is the weighted Max-Cut problem:
find the bipartition of the vertices that maximises the total weight of
edges between the two classes \cite{GoemansWilliamson1995}.  A tree is
bipartite \cite{Diestel2017}, so assigning the two phases to its two
vertex classes places every edge between the classes and attains the
minimum of every cosine term.  On trees this optimisation problem is
therefore trivial; what Corollary~\ref{thm:tree} adds is that the
oscillator dynamics finds its solution: for $h$ in \eqref{eq:tree-diffeo},
phase opposition attracts almost every reduced initial condition, and
every other fixed point has an unstable direction.  Trees thus provide
an exactly solvable test case for oscillator-based energy minimisation
\cite{WangWuNobelRoychowdhury2021,
	ChengShuklaLin2025GraphOIM,Steinerberger2023MaxCut}.

%
%
%

\begin{figure}[htbp]
\centering
\includegraphics[width=0.82\textwidth]{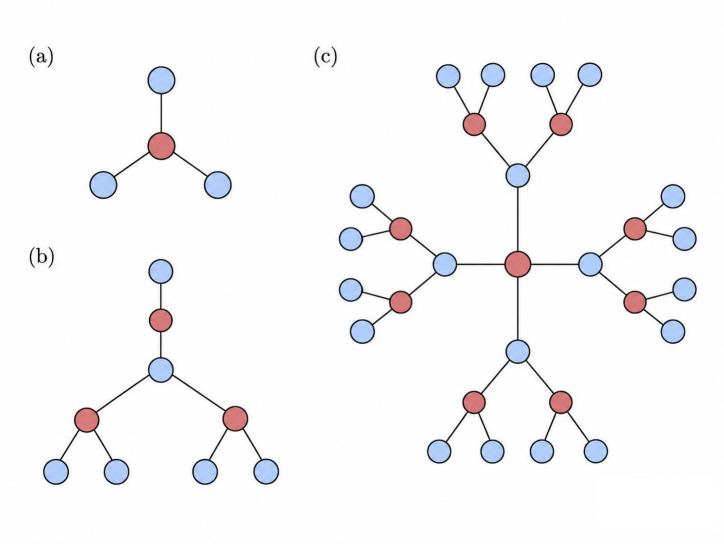}
\caption{Phase opposition on three trees.  Assigning phases $0$ and
$\pi$ to the two vertex classes places every edge at the minimum of its
sinusoidal potential.  The bipartition is unique up to interchange of
the classes, and the phase pattern is unique modulo common rotation.}
\label{fig:tree-bipartitions}
\end{figure}

\subsection{Paths and the star with three leaves}

For the unit-weight path on $N+1$ vertices, orient edge $i$ from $i$ to
$i+1$.  Its edge coordinates and potential are
\[
	z_i=\theta_{i+1}-\theta_i\pmod{2\pi},\quad i=1,\ldots,N,
	\qquad V_N(z)=\sum_{i=1}^N\cos z_i.
\]
The reduced map is $G_{K_N,h}$ from \eqref{eq:matrix-ms-map}, with
\begin{equation}
	\begin{aligned}
		G_{K_N,h}(z)&=z-hK_N\nabla V_N(z)=z+hK_N\sin z\pmod{2\pi},\\
		K_N&=B^TB=
		\begin{pmatrix}
			2&-1&0&\cdots&0\\
			-1&2&-1&\ddots&\vdots\\
			0&\ddots&\ddots&\ddots&0\\
			\vdots&\ddots&-1&2&-1\\
			0&\cdots&0&-1&2
		\end{pmatrix}.
	\end{aligned}
	\label{eq:path-gradient}
\end{equation}
Its eigenvalues are the nonzero path-Laplacian eigenvalues
\cite{Chung1997}:
\[
	\lambda_k=2-2\cos\frac{k\pi}{N+1},\quad k=1,\ldots,N,
	\qquad\lambda_{\max}(K_N)=2+2\cos\frac{\pi}{N+1}.
\]
Since $B^TB$ and $L_a=BB^T$ have the same nonzero spectrum,
$\Lambda=\lambda_{\max}(K_N)$, and Theorem~\ref{thm:finite-estimates}
shows that $V_N$ decreases strictly outside its critical set for
\[
	0<h<\frac{2}{2+2\cos(\pi/(N+1))}.
\]

\begin{corollary}[Sinusoidal paths]
	\label{thm:path}\label{thm:path-morse-smale}
	For every $N\ge1$, the reduced path map $G_{K_N,h}$ is a Morse--Smale
	diffeomorphism for
	\begin{equation}
		0<h<\frac{1}{2+2\cos(\pi/(N+1))};
		\label{eq:path-diffeo-range}
	\end{equation}
	in particular, $0<h\le1/4$ suffices for every path length.  Its fixed
	points are the $2^N$ critical points of Corollary~\ref{thm:tree};
	$(\pi,\ldots,\pi)$ is the unique sink, with a full-measure basin, and
	$(0,\ldots,0)$ is the unique source.
\end{corollary}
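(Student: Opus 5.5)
The plan is to apply Theorem~\ref{thm:matrix-morse-smale} directly with $K=K_N$, since \eqref{eq:path-gradient} already shows the reduced path map is exactly $G_{K_N,h}$. Three hypotheses on $K_N$ need checking. Symmetry is immediate. For weak diagonal dominance, each row has diagonal entry $2$ and at most two off-diagonal entries equal to $-1$. The interior rows are balanced with equality, $2=1+1$, while the first and last rows are strictly dominant, $2>1$. When $N=1$ the matrix is $(2)$ and the condition is trivial. For nonsingularity, the eigenvalues $\lambda_k=2-2\cos(k\pi/(N+1))$ are all positive for $1\le k\le N$. Alternatively, $K_N=B^TB$ with $B$ of full column rank, since a tree's incidence matrix has independent columns, so $K_N$ is positive definite.

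With the hypotheses in place, the range \eqref{eq:matrix-ms-range} becomes $0<h<1/\lambda_{\max}(K_N)=1/(2+2\cos(\pi/(N+1)))$, which is \eqref{eq:path-diffeo-range}. For the uniform statement, note that $\cos(\pi/(N+1))<1$, so $\lambda_{\max}(K_N)<4$ and hence $1/4<1/\lambda_{\max}(K_N)$. Every $h\in(0,1/4]$ therefore lies in the admissible interval.

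Theorem~\ref{thm:matrix-morse-smale} then supplies all the remaining conclusions. The nonwandering set is $\{0,\pi\}^N$, which coincides with the critical set of Corollary~\ref{thm:tree}. There are $\binom Nr$ fixed points of unstable dimension $r$. The point $(\pi,\ldots,\pi)$ is the unique sink, with a basin of full measure. For the source, the case $r=N$ gives exactly one fixed point of unstable dimension $N$, namely the point with all coordinates zero, $(0,\ldots,0)$. Every other fixed point has $r<N$ and so has a stable direction, which makes $(0,\ldots,0)$ the unique source.

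There is essentially no obstacle here. The only point needing care is that the interior rows satisfy \eqref{eq:matrix-dominance} only with equality. This is precisely why the theorem was stated with weak dominance and with nonsingularity assumed separately, so the equality case causes no difficulty.
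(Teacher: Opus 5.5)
Your proposal is correct and matches the paper's proof: check that $K_N$ is symmetric, nonsingular and weakly diagonally dominant, apply Theorem~\ref{thm:matrix-morse-smale}, and use $\lambda_{\max}(K_N)<4$ to include $h=1/4$. To identify the unique source as $(0,\ldots,0)$ you need that unstable dimension equals the number of zero coordinates, which the theorem's proof establishes (as do Corollary~\ref{thm:tree} and Proposition~\ref{thm:Morse-hyperbolic}), so that step is justified.
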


\begin{proof}
	The matrix $K_N$ is nonsingular, since $B$ has full column rank, and
	weakly diagonally dominant, so Theorem~\ref{thm:matrix-morse-smale}
	applies.  Since $\lambda_{\max}(K_N)<4$ for finite $N$, $h=1/4$ is
	included.
\end{proof}

For $N=1$ this recovers the two-oscillator model
\cite{OliveiraMelo2015}; for $N=2$ it gives the aligned
three-oscillator model \cite{BuescuDAnielloOliveira2025Aligned}.
In the latter case, reversing one edge coordinate conjugates $K_2$ by
$\operatorname{diag}(1,-1)$, changing its off-diagonal entries from
$-1$ to $1$ and recovering the coordinates used in
\cite{BuescuDAnielloOliveira2025Aligned,
	BuescuDAnielloOliveira2025Lyapunov}.

\begin{figure}[htbp]
	\centering
	\includegraphics[width=0.61\textwidth]{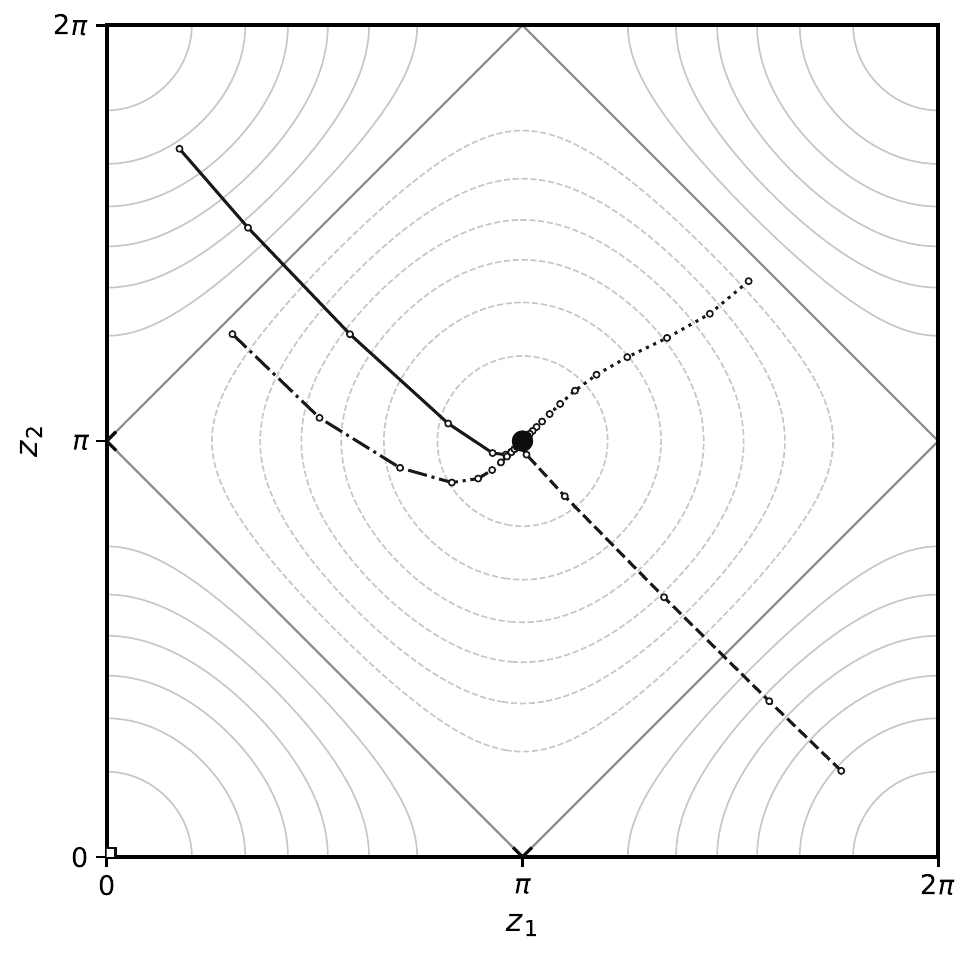}
	\par\medskip
	\includegraphics[width=0.90\textwidth]{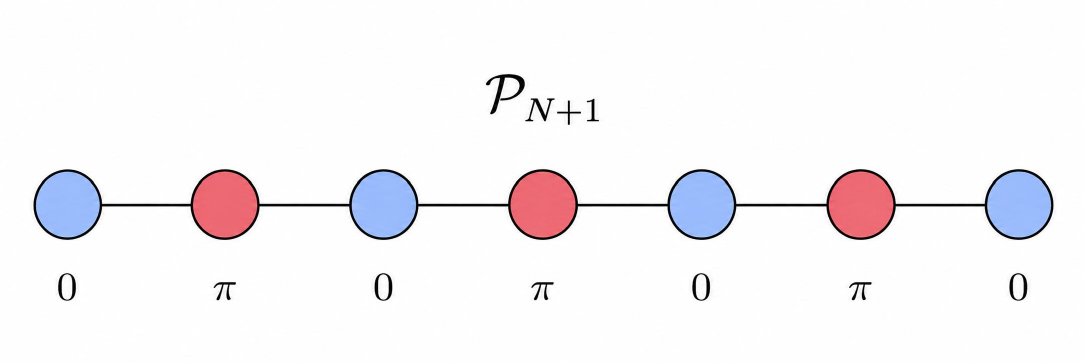}
	\caption{Top: representative discrete orbits on the three-oscillator
		path, with $h=0.28<1/3$, over level sets of
		$V_2=\cos z_1+\cos z_2$.  The open square is the source $(0,0)$, the
		crosses are the saddles $(0,\pi),(\pi,0)$, and the filled circle is the
		sink $(\pi,\pi)$.  Bottom: phase opposition on a path, with alternating
		vertex phases $0,\pi$ and every edge difference equal to $\pi$.}
	\label{fig:path-discrete-dynamics}\label{fig:path-opposition}
\end{figure}

\begin{corollary}[The star with three leaves]
	\label{cor:star-morse-smale}
	For four oscillators on the unit-weight star $\mathsf K_{1,3}$, with one centre
	joined to three leaves, the reduced sinusoidal map is Morse--Smale for
	$0<h<1/4$.  It has eight fixed points: one sink, three of unstable
	dimension one, three of unstable dimension two, and one source.
\end{corollary}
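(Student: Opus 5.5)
The star $\mathsf K_{1,3}$ is a tree, so I will use the edge coordinates of Proposition~\ref{thm:general-tree-factorisation}. Orient each of the three edges from the centre to its leaf. Then $B^T\theta$ has components $\theta_{\ell}-\theta_c$ for the leaves $\ell$. With unit weights the reduced map is $z\mapsto z+hK\sin z$ with $K=B^TB$, and the entries of $K$ are inner products of incidence columns. Each column has squared norm $2$. Two distinct columns share only the centre, and the centre carries the same sign in both, so their inner product is $+1$. Hence
\[
K=\begin{pmatrix}2&1&1\\1&2&1\\1&1&2\end{pmatrix}=I_3+\one_3\one_3^T .
\]
This matrix is symmetric and nonsingular, since $B$ has full column rank for a tree. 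It is weakly diagonally dominant with equality in every row, because $2=1+1$. Its eigenvalues are $1,1,4$, so $\lambda_{\max}(K)=4$.

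Theorem~\ref{thm:matrix-morse-smale} therefore applies for $0<h<1/4$. It makes the reduced map a Morse--Smale diffeomorphism with fixed set $\{0,\pi\}^3$ and $\binom3r$ fixed points of unstable dimension $r$. This gives one source ($r=3$), three points with $r=2$, three with $r=1$, and the sink $(\pi,\pi,\pi)$ ($r=0$), eight in total. The same count also follows from Corollary~\ref{thm:tree}.

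The Morse--Smale property must also hold in the original quotient coordinates $x$, not only in the edge coordinates. By Proposition~\ref{thm:general-tree-factorisation}, $z=Cx$ is a linear automorphism of $\mathbb T^3$ with $P=2\pi$. Conjugacy by a diffeomorphism preserves hyperbolicity, nonwandering sets and transversality, so the Morse--Smale property transfers.

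I expect no step to be a real obstacle. The only points needing care are the signs of the off-diagonal entries, which do not matter because dominance involves absolute values, and confirming that $\lambda_{\max}(K)=4$ agrees with $\lambda_{\max}(L_a)$ for the star. The star Laplacian has eigenvalues $0,1,1,4$, which matches. By the optimality part of Theorem~\ref{thm:matrix-morse-smale}, the interval $0<h<1/4$ is sharp.
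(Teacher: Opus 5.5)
Your proof is correct and follows the paper's route: orient the edges from the centre, compute $K=B^TB=I_3+\one_3\one_3^T$, check that it is symmetric, nonsingular and weakly diagonally dominant with spectrum $\{1,1,4\}$, and apply Theorem~\ref{thm:matrix-morse-smale} to count the $\binom3r$ fixed points. Your remarks on transferring the conclusion through the linear automorphism $z=Cx$ and on the sharpness of $h<1/4$ are valid additions that the paper's short proof does not spell out.
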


\begin{proof}
	Orienting all edges from the centre gives the weakly diagonally dominant
	matrix
	\[
		K=B^TB=\begin{pmatrix}2&1&1\\1&2&1\\1&1&2\end{pmatrix},
		\qquad \operatorname{spec}(K)=\{1,1,4\},
	\]
	which is nonsingular. The result follows from direct application of  Theorem~\ref{thm:matrix-morse-smale}.
\end{proof}

For a unit-weight tree, an edge $\{u,v\}$ meets
$\deg(u)+\deg(v)-2$ other edges.  Hence $B^TB$ is weakly diagonally
dominant exactly when $\deg(u)+\deg(v)\le4$ on every edge.  Such a
connected tree is a path or $\mathsf K_{1,3}$: a vertex of degree three can have
only leaves as neighbours, and larger degrees are excluded.  This
classification does not depend on the choice of coordinates: the only
integer changes of coordinates that preserve the form $z+hK\sin z$ are signed permutations, since
$\sin(\sum_j c_jw_j)$ is a linear combination of the $\sin w_j$ only when
exactly one $c_j$ is nonzero and equal to $\pm1$; signed permutations
preserve diagonal dominance.  It classifies
the unit-weight trees covered by the matrix criterion; it does not
exclude Morse--Smale dynamics on other trees.

\begin{corollary}[Weighted paths]
	\label{cor:wpath}
	Consider the path on $N+1$ vertices with weights $a_1,\ldots,a_N>0$ and
	sinusoidal interaction, and set $a_0=a_{N+1}=0$.  If
	\begin{equation}
		\label{eq:concave}
		2a_i\ge a_{i-1}+a_{i+1},\qquad i=1,\ldots,N,
	\end{equation}
	then the reduced map is Morse--Smale for $0<h<1/\lambda_{\max}(L_a)$,
	and this interval is optimal.
\end{corollary}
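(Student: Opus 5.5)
The plan is to realise the weighted path as an instance of Theorem~\ref{thm:weighted}. By Proposition~\ref{thm:general-tree-factorisation}, in the edge coordinates $z_i=\theta_{i+1}-\theta_i$ the reduced sinusoidal map is
\[
z\longmapsto z+hB^TBA\sin z=G_{K_N,A,h}(z),\qquad A=\operatorname{diag}(a_1,\ldots,a_N),
\]
with $K_N$ the tridiagonal matrix of \eqref{eq:path-gradient}. As observed in the proof of Corollary~\ref{thm:path}, $K_N$ is symmetric, nonsingular and weakly diagonally dominant. Hence only the weighted dominance condition \eqref{eq:weighted-dominance} and the identification of $\tilde\lambda$ remain to be checked.

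For row $i$ of $K_N$ the diagonal entry is $2$ and the only nonzero off-diagonal entries are $-1$ in columns $i\pm1$ when these exist. Condition \eqref{eq:weighted-dominance} therefore reads
\[
2a_i\ge a_{i-1}+a_{i+1},
\]
with the convention $a_0=a_{N+1}=0$ absorbing the missing neighbours in rows $1$ and $N$. This is exactly \eqref{eq:concave}.

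Next I identify the step-size bound. We have $\tilde\lambda=\lambda_{\max}(A^{1/2}B^TBA^{1/2})$. Since $A^{1/2}B^TBA^{1/2}=(BA^{1/2})^T(BA^{1/2})$ and $L_a=BAB^T=(BA^{1/2})(BA^{1/2})^T$ share their nonzero spectrum, it follows that $\tilde\lambda=\lambda_{\max}(L_a)$. Theorem~\ref{thm:weighted} then gives the Morse--Smale conclusion for $0<h<1/\lambda_{\max}(L_a)$. It also gives the fixed points $\{0,\pi\}^N$, their unstable dimensions and the full-measure basin of $(\pi,\ldots,\pi)$.

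For optimality I would argue exactly as in Corollary~\ref{cor:tree-sharp}. The matrix $B^TBA$ is similar to the positive definite matrix $A^{1/2}B^TBA^{1/2}$, so $\lambda_{\max}(B^TBA)=\lambda_{\max}(L_a)>0$. Remark~\ref{prop:sharpK} then shows that for $h\ge1/\lambda_{\max}(L_a)$ the map fails to be a local diffeomorphism, and so cannot be Morse--Smale.

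No step is a real obstacle. The only point needing care is the boundary rows, which is precisely what the convention $a_0=a_{N+1}=0$ handles. The case $N=1$ is also covered: there \eqref{eq:concave} reduces to $2a_1\ge0$, which holds automatically.
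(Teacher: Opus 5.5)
Your proposal is correct and follows the paper's proof step for step: you identify the reduced map as $G_{K_N,A,h}$ via Proposition~\ref{thm:general-tree-factorisation}, read \eqref{eq:concave} as \eqref{eq:weighted-dominance}, obtain $\tilde\lambda=\lambda_{\max}(L_a)$ by comparing $(BA^{1/2})^T(BA^{1/2})$ with $(BA^{1/2})(BA^{1/2})^T$, and then apply Theorem~\ref{thm:weighted}, with optimality from Corollary~\ref{cor:tree-sharp}. Your use of Remark~\ref{prop:sharpK} for optimality simply writes out the proof of that corollary.
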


\begin{proof}
	By Proposition~\ref{thm:general-tree-factorisation}, the reduced map is
	$G_{K_N,A,h}$, with $K_N$ as in \eqref{eq:path-gradient}, and
	\eqref{eq:concave} is \eqref{eq:weighted-dominance}.  The matrix
	$A^{1/2}K_NA^{1/2}=(BA^{1/2})^T(BA^{1/2})$ has the nonzero spectrum of
	$BAB^T=L_a$, so $\tilde\lambda=\lambda_{\max}(L_a)$.
	Theorem~\ref{thm:weighted} applies, and optimality is
	Corollary~\ref{cor:tree-sharp}.
\end{proof}

\begin{remark}
	The weights $a_i=\sin\bigl(i\pi/(N+1)\bigr)$ satisfy \eqref{eq:concave}
	strictly, the weights $a_i=i$ satisfy it with equality for $i<N$, and
	geometric weights $a_i=r^i$, $r\ne1$, violate it.  For paths the
	admissible weights thus form a closed convex cone with nonempty
	interior, an explicit open set of Morse--Smale weights.  Unit weights lie
	on its boundary, so this complements the non-explicit neighbourhood of
	Corollary~\ref{cor:hamiltonian-morse-smale}.  Strict dominance of $K$ in
	some row is essential: by symmetry of $K$,
	\[
	\sum_{i=1}^N\Bigl(k_{ii}a_i-\sum_{j\ne i}|k_{ij}|a_j\Bigr)
	=\sum_{j=1}^Na_j\Bigl(k_{jj}-\sum_{i\ne j}|k_{ij}|\Bigr),
	\]
		so if \eqref{eq:matrix-dominance} is an equality in every row, so is
	\eqref{eq:weighted-dominance}.  For $\mathsf K_{1,3}$ this forces
	$a_1=a_2=a_3$, which only rescales $h$ in
	Corollary~\ref{cor:star-morse-smale}.
	\end{remark}

\subsection{Structural stability and additional links}

\begin{corollary}[Structural stability]
	\label{cor:matrix-structural-stability}
	\label{cor:path-structural-stability}
	Fix $K$ and $h$ satisfying the hypotheses of
	Theorem~\ref{thm:matrix-morse-smale}.  There is a $C^1$ neighbourhood
	$\mathcal U$ of $G_{K,h}$ in the space of $C^1$ diffeomorphisms of
	$\mathbb T^N$ such that every $\widehat G\in\mathcal U$ is Morse--Smale
	and topologically conjugate to $G_{K,h}$, that is,
	$H\circ G_{K,h}=\widehat G\circ H$ for some homeomorphism
	$H:\mathbb T^N\to\mathbb T^N$.  Each such map has exactly $2^N$ fixed
	points, $\binom Nr$ of unstable dimension $r$, and one sink.
\end{corollary}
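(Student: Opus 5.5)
The plan is to derive the corollary from Theorem~\ref{thm:matrix-morse-smale} together with the classical structural stability theorem for Morse--Smale diffeomorphisms (Palis--Smale, Robbin, Robinson), which asserts that Morse--Smale diffeomorphisms of a compact manifold form a $C^1$-open set and are $C^1$-structurally stable. Theorem~\ref{thm:matrix-morse-smale} already states that $G_{K,h}$ is a Morse--Smale diffeomorphism of $\mathbb T^N$, with nonwandering set $\{0,\pi\}^N$ consisting of hyperbolic fixed points, $\binom Nr$ of them of unstable dimension $r$. So no new dynamical estimate is needed; the work is in invoking the general theorem correctly and transporting the fixed-point data.

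The first step is to take the $C^1$ neighbourhood $\mathcal U$ furnished by structural stability. Every $\widehat G\in\mathcal U$ is then topologically conjugate to $G_{K,h}$ through a homeomorphism $H$ with $H\circ G_{K,h}=\widehat G\circ H$. Shrinking $\mathcal U$ if necessary, we may also use openness of the Morse--Smale class, so that each $\widehat G$ is itself Morse--Smale.

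The second step transfers the counts. A conjugacy maps fixed points bijectively to fixed points, so $\widehat G$ has exactly $2^N$ fixed points. It also maps nonwandering sets onto nonwandering sets, so $\widehat G$ has no other nonwandering points. To match unstable dimensions, we use hyperbolic continuation. Each fixed point $p$ of $G_{K,h}$ is hyperbolic, hence $DG_{K,h}(p)-I$ is invertible. By the implicit function theorem, after shrinking $\mathcal U$ once more, $\widehat G$ has a unique fixed point $\hat p$ near $p$, and $\hat p$ is hyperbolic with the same unstable dimension, because the spectrum of $D\widehat G(\hat p)$ stays $C^0$-close to that of $DG_{K,h}(p)$ and avoids the unit circle. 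Since $\widehat G$ has exactly $2^N$ fixed points in total, these continuations exhaust them. Hence there are $\binom Nr$ fixed points of unstable dimension $r$, and in particular exactly one sink, the continuation of $(\pi,\ldots,\pi)$. Alternatively, the conjugacy itself preserves the topological dimension of unstable sets, $H(W^u(p))=W^u(H(p))$, which gives the same count.

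The main obstacle is only bookkeeping: we must ensure that a single neighbourhood $\mathcal U$ simultaneously satisfies the structural stability statement and the continuation statement, and that $C^1$ perturbations remain diffeomorphisms. This is handled by intersecting finitely many open neighbourhoods, one for each of the $2^N$ fixed points plus the structural stability neighbourhood, inside the open set of $C^1$ diffeomorphisms.
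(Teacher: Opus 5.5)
Your proposal is correct and takes the same route as the paper, which applies the Palis--Smale structural stability theorem to Theorem~\ref{thm:matrix-morse-smale} and uses hyperbolic continuation to carry over the unstable dimensions. Your extra bookkeeping makes explicit what the paper's two-line proof leaves implicit: you invoke openness of the Morse--Smale class separately, since transversality is not a topological invariant, and you use the fixed-point count to show that the continuations exhaust all fixed points.
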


\begin{proof}
	Apply the structural stability theorem for Morse--Smale diffeomorphisms
	\cite{PalisSmale1970} to Theorem~\ref{thm:matrix-morse-smale}.
	Hyperbolic continuation preserves the unstable dimensions of the fixed
	points.
\end{proof}

In the network cases, this includes small changes in edge weights,
interaction laws and oscillator frequencies whenever the induced reduced
map is $C^1$-close to the original map.  The perturbation need not preserve
the potential or reciprocal coupling.  The full model must still commute
with common phase shifts to define a map on the same quotient.
The admissible perturbation size may depend on $K$, $N$ and $h$.

\begin{corollary}[Graphs containing a spanning path]
	\label{cor:hamiltonian-morse-smale}
	Let $\mathcal G$ be an undirected graph on $N+1$ vertices containing a
	spanning path $\Pi$. Fix $h$ satisfying \eqref{eq:path-diffeo-range}.  There is
	$\varepsilon\in(0,1)$ such that the reduced sinusoidal map on
	$\mathcal G$ is Morse--Smale and topologically conjugate to the
	unit-weight path map whenever
	\begin{equation}
		\label{eq:hamiltonian-weight-range}
		|a_e-1|<\varepsilon\quad(e\in\Pi),
		\qquad
		0<a_e<\varepsilon\quad(e\notin\Pi).
	\end{equation}
	In particular, these positive weights form an open set of parameters with
	exactly $2^N$ fixed points and one sink.
\end{corollary}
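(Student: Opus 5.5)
The plan is to realise the reduced map on $\mathcal G$ as a $C^1$-small perturbation of the unit-weight path map $G_{K_N,h}$ and then invoke Corollary~\ref{cor:matrix-structural-stability}. First I fix coordinates adapted to $\Pi$. Relabel the vertices so that $\Pi$ visits $1,2,\ldots,N+1$ in order, and use the path edge differences $z_i=\theta_{i+1}-\theta_i$ as coordinates on the common-phase quotient. By Proposition~\ref{thm:general-tree-factorisation} applied to the spanning tree $\Pi$, these coordinates identify the quotient with $\mathbb T^N$ via a unimodular linear automorphism. Any other edge $e=\{u,v\}$ of $\mathcal G$, with $u<v$, then has phase difference $\theta_v-\theta_u=\sum_{i=u}^{v-1}z_i=c_e^Tz$, where $c_e\in\{0,1\}^N$ is an integer vector. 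Applying the edge-difference map to \eqref{eq:network} gives the reduced map explicitly as
\[
z\longmapsto z+hB_\Pi^TB_\Pi A_\Pi\sin z+h\,R(z),
\]
where $A_\Pi=\operatorname{diag}(a_e)_{e\in\Pi}$. The remainder $R(z)$ collects the contributions of the non-path edges. Each such edge contributes a term of the form $a_e\,w_e\sin(c_e^Tz)$, where the fixed integer vector $w_e$ is obtained by applying $B_\Pi^T$ to $b_e$ (up to sign conventions).

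Next I estimate the $C^1$ distance to $G_{K_N,h}=z+hK_N\sin z$. The path part differs from it by $hK_N(A_\Pi-I_N)\sin z$. Its $C^1$ norm is bounded by $h\|K_N\|\max_{e\in\Pi}|a_e-1|$, up to a constant that depends only on $N$. The remainder $hR$ has $C^1$ norm at most $h\sum_{e\notin\Pi}a_e\|w_e\|(1+\|c_e\|)$. Under \eqref{eq:hamiltonian-weight-range}, both quantities are therefore $O(\varepsilon)$, with a constant depending only on $N$, on $h$ and on the finitely many integer vectors $c_e,w_e$. Since $K_N$ is nonsingular and weakly diagonally dominant, and $h$ satisfies \eqref{eq:path-diffeo-range}, Corollary~\ref{cor:path-structural-stability} supplies a $C^1$ neighbourhood $\mathcal U$ of $G_{K_N,h}$. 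I then choose $\varepsilon\in(0,1)$ so small that this $O(\varepsilon)$ bound places the perturbed map inside $\mathcal U$. A further shrinking of $\varepsilon$, if needed, ensures the perturbed map is itself a diffeomorphism; this is automatic once it lies in $\mathcal U$, since $\mathcal U$ consists of diffeomorphisms.

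The conclusions then follow directly from Corollary~\ref{cor:matrix-structural-stability}. Every map in $\mathcal U$ is Morse--Smale and topologically conjugate to the path map, with $2^N$ fixed points and one sink. The conjugacy is transported back to the original quotient coordinates by the linear automorphism, which preserves the Morse--Smale property and conjugacy class. The parameter set described by \eqref{eq:hamiltonian-weight-range} is an open box in the positive orthant $(0,\infty)^{\mathcal E}$. Openness of the full Morse--Smale weight set is also immediate, because the map depends continuously in $C^1$ on the weights.

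The main obstacle is bookkeeping rather than analysis. The remainder $R$ is not of the form $K\sin z$, because $\sin(c_e^Tz)$ couples several coordinates. This means the matrix criterion cannot be applied directly, and only the perturbative route works; it is also why $\varepsilon$ is non-explicit. Care is needed to check that the $C^1$ topology in Corollary~\ref{cor:matrix-structural-stability} is the one controlled by these estimates on $\mathbb T^N$. This holds because all terms are trigonometric polynomials with bounded derivatives, so the $C^0$ and $C^1$ norms are bounded by the coefficient sizes times constants depending on $c_e$.
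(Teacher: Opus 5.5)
Your proposal is correct and is essentially the paper's argument: weights $a_e=1$ on $\Pi$ and $a_e=0$ off $\Pi$ give the path map, the reduced map depends continuously in $C^1$ on the weights, and Corollary~\ref{cor:matrix-structural-stability} supplies $\varepsilon$. The differences are minor. First, you perturb in path-edge coordinates with explicit $C^1$ bounds and transport the conclusion back by the unimodular automorphism of Proposition~\ref{thm:general-tree-factorisation}, whereas the paper first moves the path map into reference-phase coordinates. Second, your argument that the perturbed map is a diffeomorphism is circular, since membership in $\mathcal U$ already presupposes it. Justify it instead by openness of diffeomorphisms in the $C^1$ topology, or by Theorem~\ref{thm:finite-estimates}, since $\lambda_{\max}(L_a)$ varies continuously with the weights.
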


\begin{proof}
	Set $a_e=1$ on $\Pi$ and $a_e=0$ on the remaining edges.  This is the
	path map of Corollary~\ref{thm:path-morse-smale}, expressed in the
	reference-phase coordinates of Section~\ref{sec:reciprocal-gradient};
	by Proposition~\ref{thm:general-tree-factorisation} the two coordinate
	systems differ by a linear automorphism of the torus, so the map is
	Morse--Smale in either.  The reduced map depends continuously in $C^1$ on
	all edge weights, and
	Corollary~\ref{cor:matrix-structural-stability} therefore supplies
	$\varepsilon$ with the stated property.
\end{proof}

A spanning path is also called a Hamiltonian path. 

This corollary includes cycles closed by a weak link and complete graphs
whose additional links are sufficiently weak.  The value of
$\varepsilon$ may depend on the graph, the chosen path and $h$.

\begin{remark}[The two signs of sinusoidal interaction]
	\label{rem:sinusoidal-sign-conjugacy}
	Let $\tau(z)=z+\pi\one_N\pmod{2\pi}$.  Then
	\[
	\tau^{-1}\circ G_{K,h}\circ\tau(z)=z-hK\sin z.
	\]
	Consequently, the matrix criterion also applies with the opposite sign
	of the sine interaction.  On a tree, $\tau$ is induced by adding $\pi$
	to the vertex phases in one bipartition class.
	The path and three-leaf-star results therefore also give structural
	stability for $g(s)=-\sin s$, whose unique sink is the equal-phase state.
	The weak-link corollary holds for that sign too, by perturbing the
	corresponding path map.
\end{remark}

All structural stability statements concern reduced dynamics.  The full
phase system retains the neutral common-phase direction and is not
Morse--Smale.  For continuous all-to-all Kuramoto dynamics, Dai, Fiedler
and L\'opez-Nieto \cite{DaiFiedlerLopezNieto2025} prove transversality for
heteroclinic connections between equilibria with positive order parameter.
Theorem~\ref{thm:matrix-morse-smale} gives a direct finite-step criterion
on the entire reduced torus.

\subsection{A family of trees with non-transverse intersections}

Even for unit weights, a Morse potential and strict Lyapunov decrease do
not guarantee transversality on every tree.

\begin{proposition}[A tree obstruction to transversality]
	\label{prop:tree-sign-obstruction}
	Let $T$ be a tree with unit weights.  Suppose that its edges admit signs
	$d_e\in\{-1,1\}$, including a negative sign, such that
	\begin{equation}
		\label{eq:tree-sign-condition}
		\sum_{e\ni v}d_e=1\qquad\text{at every vertex }v.
	\end{equation}
	Then, for every $0<h<1/\lambda_{\max}(B^TB)$, its reduced sinusoidal map
	has a non-transverse heteroclinic intersection, and so is not
	Morse--Smale.  If $r$ edges have negative signs, the tree has $4r+2$
	vertices, and the stable and unstable tangent spaces along this
	intersection coincide, with dimension $3r+1$ in the reduced space of
	dimension $4r+1$.
\end{proposition}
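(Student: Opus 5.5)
The plan is to exhibit an explicit invariant circle on which the map reduces to a one-dimensional map and its derivative is a fixed matrix pencil. Then I identify both tangent spaces along the circle with one eigenspace.

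\textbf{Step 1: the invariant circle.} Orient every edge of $T$ from one bipartition class $X$ to the other class $Y$ (a tree is bipartite). Then $B=\operatorname{diag}(\sigma)\,|B|$, where $\sigma_v=1$ on $X$, $\sigma_v=-1$ on $Y$, and $|B|$ is the unsigned incidence matrix. Condition \eqref{eq:tree-sign-condition} says $|B|d=\one$, so $Bd=\sigma$. For an edge $e=(x,y)$ this gives $(B^TBd)_e=\sigma_x-\sigma_y=2$, that is, $K d=2\one$ with $K=B^TB$.

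Now put $z_e(t)=t$ if $d_e=1$ and $z_e(t)=\pi+t$ if $d_e=-1$. Then $\sin z(t)=d\sin t$ and $z(t)=z(0)+t\one$. By Proposition~\ref{thm:general-tree-factorisation} the reduced map is $G_{K,h}$, and
\[
G_{K,h}(z(t))=z(t)+2h\sin t\,\one=z\bigl(t+2h\sin t\bigr).
\]
So the circle is invariant, and on it the map is conjugate to the circle map $t\mapsto t+2h\sin t$. Since $2h<1$ (because $\lambda_{\max}(K)\ge K_{ee}=2$), this circle map is a monotone lift. Every $t\in(0,\pi)$ then has an orbit tending to $0$ backwards and to $\pi$ forwards. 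Hence $z(t)\in W^u(p)\cap W^s(q)$, with $p=z(0)$ and $q=z(\pi)$.

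\textbf{Step 2: counting.} Summing \eqref{eq:tree-sign-condition} over the $N+1$ vertices counts every edge twice, so $2\sum_e d_e=N+1$. This gives $2(N-2r)=N+1$, hence $N=4r+1$ edges, $4r+2$ vertices, and $3r+1$ positive signs.

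By Theorem~\ref{thm:matrix-morse-smale}'s hyperbolicity computation, which I reprove directly below since $K$ need not be dominant, the following holds. The point $p$ has $3r+1$ zero coordinates, so $\dim W^u(p)=3r+1$. The point $q$ has $r$ zero coordinates, so $\dim W^s(q)=3r+1$. The index difference is $2r+1$, and $2(3r+1)>4r+1$, so transversality is not excluded by dimension.

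\textbf{Step 3 (main step): the tangent spaces coincide.} Along the circle, $D(z(t))=\cos t\,E$ with $E=\operatorname{diag}(d)$. Hence
\[
DG_{K,h}(z(t))=I_N+h\cos t\,KE .
\]
All these derivatives are polynomials in the single matrix $KE$. The matrix $KE$ is similar to the symmetric matrix $K^{1/2}EK^{1/2}$, because $K$ is positive definite ($B$ has full column rank). Therefore $KE$ is diagonalizable with real eigenvalues $\mu$. By Sylvester's law it has $3r+1$ positive and $r$ negative eigenvalues, and $|\mu|\le\lambda_{\max}(K)$. Let $E_\pm$ be the corresponding eigenspaces. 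They are invariant under every $DG(z(t))$, which acts on the $\mu$-eigenvector by the scalar $1+h\mu\cos t>0$, using $h<1/\lambda_{\max}(K)$. This scalar also gives hyperbolicity at $p$ and $q$.

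Take $t_n\to0$ as $n\to-\infty$. A vector in the $\mu$-eigenspace is multiplied, over the backward orbit, by products of $(1+h\mu\cos t_n)^{-1}$, where $\cos t_n\to1$. These products decay exponentially if $\mu>0$ and grow if $\mu<0$. So the vectors whose backward iterates decay are exactly $E_+$, which gives $T_{z(t)}W^u(p)=E_+$. Symmetrically, $\cos t_n\to-1$ as $n\to+\infty$, so forward iterates decay exactly on $E_+$, which gives $T_{z(t)}W^s(q)=E_+$.

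The identification of tangent spaces with decaying subspaces is the one used in the proof of Lemma~\ref{lem:adjoint-transversality}. Alternatively, one can exhibit the nonzero adjoint solution with $\eta_0$ a covector annihilating $E_+$, which is nonzero because $r\ge1$. Thus $T W^u+T W^s=E_+\neq\R^N$, and the two tangent spaces coincide with dimension $3r+1$.

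The main care is in this identification of stable and unstable tangent spaces via decay along an orbit whose multipliers vary with $n$. The commuting structure makes the estimate a scalar one on each eigenspace.
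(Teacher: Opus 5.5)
Your proof is correct and is essentially the paper's argument. It uses the same invariant circle $z(t)=z(0)+t\one$ with induced map $t\mapsto t+2h\sin t$, the same vertex count $2(N-2r)=N+1$, and the same observation that all the derivatives $I_N+h\cos t\,K\operatorname{diag}(d)$ share the eigenspaces of $K\operatorname{diag}(d)$, which is similar to $K^{1/2}\operatorname{diag}(d)K^{1/2}$, so both tangent spaces equal $E_+$. Your factorisation $B=\operatorname{diag}(\sigma)|B|$ with $|B|d=\one$ is a tidier route to $Kd=2\one$ than the paper's entrywise computation. (Under the paper's head-minus-tail convention the signs of $\sigma$ come out reversed, but this is harmless because $(B^T\sigma)_e=\sum_v|B|_{ve}\sigma_v^2=2$ either way.)
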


\begin{proof}
	Orient every edge from one bipartition class to the other and set
	$K=B^TB$ and $d=(d_e)_{e\in\mathcal E}$.  The diagonal entries of $K$ are
	two, and its off-diagonal entries are one when the edges share a vertex
	and zero otherwise.  For an edge $uv$, \eqref{eq:tree-sign-condition}
	gives
	\[
	(Kd)_{uv}=\sum_{e\ni u}d_e+\sum_{e\ni v}d_e=2.
	\]
	Put $\Delta=\operatorname{diag}(d_e)$ and let $p_e=0$ if $d_e=1$, and
	$p_e=\pi$ if $d_e=-1$.  Writing $m$ for the number of edges and
	$\one_m=(1,\ldots,1)^T$, the circle $z(s)=p+s\one_m$ is invariant,
	with induced map $s\mapsto s+2h\sin s$.
	The bound on $h$ implies $h<1/2$, since $\lambda_{\max}(K)\ge2$.
	For $0<s<\pi$, the scalar map is strictly increasing and maps this
	interval into itself.  Since $s+2h\sin s>s$ there, every orbit has
	forward limit $\pi$ and backward limit zero.
	Thus the arc joins $p$ to its binary complement
	$q=p+\pi\one_m\pmod{2\pi}$.
	
	Along the arc, the derivative is $I_m+h\cos s\,K\Delta$.
	The matrix $K\Delta$ is similar to the symmetric matrix
	$K^{1/2}\Delta K^{1/2}$.  Sylvester's law of inertia shows that
	$K\Delta$ has $m-r$ positive and $r$ negative eigenvalues.  Every
	eigenvalue $\mu$ satisfies $|\mu|\le\lambda_{\max}(K)$.  All variational
	factors $1+h\mu\cos s$ are therefore positive, and the same real
	eigenspaces are preserved along the orbit.  Let $E_+$ be the sum of the
	positive eigenspaces of $K\Delta$.
	
	Tangent vectors to $W^u(p)$ decay under backward iteration of the
	derivative.  A nonzero component in a negative eigenspace instead grows
	backwards, because its forward multiplier tends to $1+h\mu\in(0,1)$
	near $p$.  Hence $T_zW^u(p)\subseteq E_+$, and equality follows from the
	dimensions.  Similarly, negative eigendirections grow forwards near $q$,
	where their multipliers tend to $1-h\mu>1$.  Thus
	\begin{equation}
		\label{eq:tree-obstruction-tangents}
		T_zW^u(p)=T_zW^s(q)=E_+.
	\end{equation}
	A negative sign makes $E_+$ a proper subspace, so the intersection is
	not transverse.
	
	Finally, summing \eqref{eq:tree-sign-condition} over vertices gives
	$2(m-2r)=m+1$, since a tree has $m+1$ vertices.  Therefore $m=4r+1$,
	and Sylvester's law gives $\dim E_+=m-r=3r+1$.
\end{proof}

The failure of transversality is not forced by dimension.  Since
$\dim W^u(p)=\dim W^s(q)=3r+1$ in a space of dimension $4r+1$, a
transverse intersection would have dimension $2r+1\ge1$ and is therefore
not excluded a priori; instead the two tangent spaces coincide.
For $h\ge1/\lambda_{\max}(B^TB)$ the map is not even a local
diffeomorphism, by Remark~\ref{prop:sharpK}.  These trees are therefore never
Morse--Smale.

\begin{proposition}[Six-vertex tree counterexample]
\label{prop:tree-nontransverse}
Consider the tree with two adjacent vertices of degree three, each joined
to two leaves, with unit weights and sinusoidal interaction.
For
\begin{equation}
\label{eq:tree-counterexample-range}
    0<h<\frac{2}{5+\sqrt{17}},
\end{equation}
its reduced map on $\mathbb T^5$ is a diffeomorphism with a non-transverse
heteroclinic intersection.
\end{proposition}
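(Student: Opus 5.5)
The plan is to show that this tree is the case $r=1$ of Proposition~\ref{prop:tree-sign-obstruction}, and then to compute the threshold $1/\lambda_{\max}(B^TB)$ explicitly.

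\emph{Sign condition.} Label the two centres $u,v$, the leaves of $u$ as $u_1,u_2$, and the leaves of $v$ as $v_1,v_2$. Put $d_{uv}=-1$ on the central edge and $d_e=+1$ on the four leaf edges. Each leaf meets one edge, of sign $+1$, so its sum is $1$. Each centre meets two leaf edges and the central edge, so its sum is $1+1-1=1$. Thus \eqref{eq:tree-sign-condition} holds with exactly one negative sign, $r=1$. This is consistent with $4r+2=6$ vertices and reduced dimension $4r+1=5$.

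\emph{Spectrum of $K=B^TB$.} Order the edges as $e_1=uu_1$, $e_2=uu_2$, $e_3=uv$, $e_4=vv_1$, $e_5=vv_2$, with all edges oriented from one bipartition class to the other. Then $K$ has diagonal entries $2$, and off-diagonal entries $1$ exactly for pairs of edges sharing a vertex: $\{1,2\},\{1,3\},\{2,3\},\{3,4\},\{3,5\},\{4,5\}$. Since $B^TB$ and $BB^T=L$ share their nonzero spectrum, it is easier to compute the spectrum of the graph Laplacian $L$.

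The antisymmetric leaf vectors $e_{u_1}-e_{u_2}$ and $e_{v_1}-e_{v_2}$ are eigenvectors of $L$ with eigenvalue $1$. On the remaining symmetric subspace, use the left--right reflection. Reflection-symmetric vectors give eigenvalue $0$ together with one further eigenvalue, and antisymmetric vectors $(a,b)$, where $a$ is the common leaf value and $b$ the centre value, give a $2\times2$ eigenproblem. For the antisymmetric part the equations are $\lambda a=a-b$ and $\lambda b=3b-2a+b=4b-2a$. This leads to $\lambda^2-5\lambda+2=0$, so $\lambda=(5\pm\sqrt{17})/2$. The symmetric part gives $\lambda=3$. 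Hence
\[
\lambda_{\max}(B^TB)=\frac{5+\sqrt{17}}{2},
\]
which is about $4.56>3$. Therefore $1/\lambda_{\max}=2/(5+\sqrt{17})$, exactly the bound \eqref{eq:tree-counterexample-range}.

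\emph{Conclusion.} By Proposition~\ref{thm:general-tree-factorisation}, the reduced map in edge coordinates is $z\mapsto z+hK\sin z$. For $h<1/\lambda_{\max}(K)$, Theorem~\ref{thm:finite-estimates} (or the diffeomorphism argument in the proof of Theorem~\ref{thm:matrix-morse-smale}) shows this map is a diffeomorphism of $\mathbb T^5$. Proposition~\ref{prop:tree-sign-obstruction} then supplies the invariant arc from $p=(0,0,\pi,0,0)$ to $q=(\pi,\pi,0,\pi,\pi)$, along which $T_zW^u(p)=T_zW^s(q)$ has dimension $4$ in $\mathbb T^5$, so this heteroclinic intersection is non-transverse.

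The only step requiring real care is the eigenvalue computation. A safe alternative check is to verify directly that the characteristic polynomial of $L$ factors as $\lambda(\lambda-1)^2(\lambda-3)(\lambda^2-5\lambda+2)$.
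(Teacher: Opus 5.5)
Your proposal is correct and follows the paper's proof. You recognise the tree as the case $r=1$ of Proposition~\ref{prop:tree-sign-obstruction}, with the single negative sign on the central edge, and check that $\lambda_{\max}(B^TB)=(5+\sqrt{17})/2$, so that \eqref{eq:tree-counterexample-range} is exactly $0<h<1/\lambda_{\max}(B^TB)$. The only differences are cosmetic: you derive the spectrum from the Laplacian using the leaf and left--right reflection symmetries, whereas the paper simply lists the eigenvalues of $K_*$ and also writes $E_+$ explicitly.
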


\begin{proof}
Label the vertices $1,\ldots,6$ and orient and order the edges as
\[
    (1,2),\quad(1,3),\quad(1,4),\quad(5,2),\quad(6,2),
\]
where the first vertex is the tail and the second is the head.
In edge coordinates,
\begin{equation}
\label{eq:tree-counterexample-map}
    G_{K_*,h}(z)=z+hK_*\sin z\pmod{2\pi},\qquad
    K_*=B^TB=
    \begin{pmatrix}
      2&1&1&1&1\\
      1&2&1&0&0\\
      1&1&2&0&0\\
      1&0&0&2&1\\
      1&0&0&1&2
    \end{pmatrix}.
\end{equation}
The signs $d=(-1,1,1,1,1)^T$ satisfy
\eqref{eq:tree-sign-condition}.  The eigenvalues of $K_*$ are
$1,1,3,(5-\sqrt{17})/2$ and $(5+\sqrt{17})/2$, so
Proposition~\ref{prop:tree-sign-obstruction} applies in the stated range.
Explicitly, with
\[
    p=(\pi,0,0,0,0)^T,\qquad q=(0,\pi,\pi,\pi,\pi)^T,
\]
the arc $z(s)=p+s\one_5$, $0<s<\pi$, has scalar dynamics
\begin{equation}
\label{eq:tree-counterexample-circle}
    s(n+1)=s(n)+2h\sin s(n).
\end{equation}
For $\Delta_*=\operatorname{diag}(-1,1,1,1,1)$, the sum of the eigenspaces
of $K_*\Delta_*$ corresponding to positive eigenvalues is
\[
    E_+=\{\xi\in\R^5:
               4\xi_1=\xi_2+\xi_3+\xi_4+\xi_5\}.
\]
It equals both tangent spaces along this heteroclinic arc by
\eqref{eq:tree-obstruction-tangents}.
\end{proof}

These connections can be displayed on the invariant two-dimensional torus
\[
    \mathbb T^2_*=\{(u,v,v,v,v):(u,v)\in\mathbb T^2\}.
\]
The restriction of \eqref{eq:tree-counterexample-map} to $\mathbb T^2_*$ is
\[
    \begin{aligned}
        u^+&=u+h(2\sin u+4\sin v),\\
        v^+&=v+h(\sin u+3\sin v),
    \end{aligned}
    \qquad\pmod{2\pi},
\]
where the superscript $+$ denotes one iterate.
Write $W^u_*(p)$ and $W^s_*(q)$ for the unstable
and stable manifolds of this restricted map.  On $u=v+\pi$, put $s=v$.
Then \eqref{eq:tree-counterexample-circle} holds, and oddness gives the
reflected branch $-\pi<s<0$.  Both branches connect $p$ to $q$, since
$s=-\pi$ and $s=\pi$ represent the same point on the torus.
\begin{figure}[tbp]
\centering
\includegraphics[width=0.48\textwidth,trim=8bp 15bp 274bp 24bp,clip]{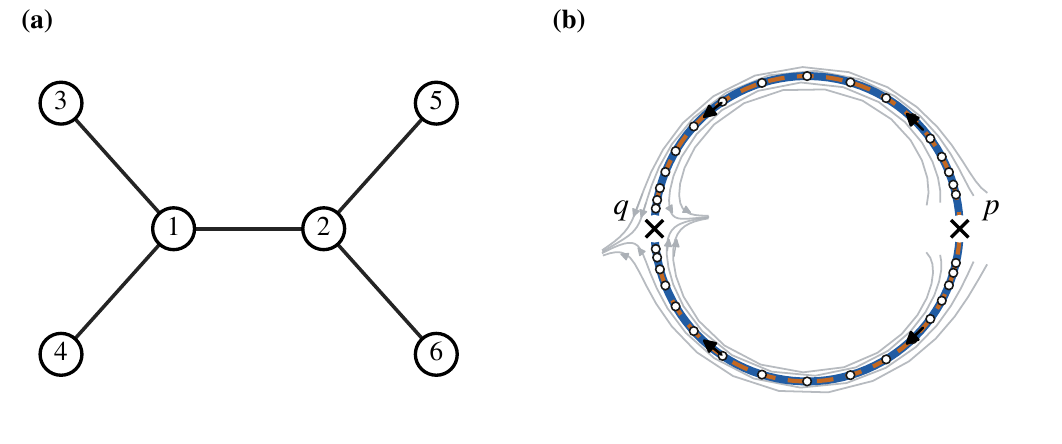}
\caption{Six-vertex tree counterexample. Undirected graph with unit weights.}
\label{fig:six-vertex-tree}
\end{figure}

Condition \eqref{eq:tree-sign-condition} says that exactly
$(\deg v-1)/2$ of the edges at each vertex $v$ are negative.  All
degrees are therefore odd, and when they are at most three the negative
edges form a perfect matching of the internal vertices.  In particular,
$\mathsf K_{1,3}$ admits no such signs, in agreement with
Corollary~\ref{cor:star-morse-smale}.

There are arbitrarily large trees satisfying
\eqref{eq:tree-sign-condition}.  Take a path of $2r$ internal vertices,
$r\ge1$, and attach leaves until each internal vertex has degree three.
Assign negative signs to alternate internal edges, pairing the internal
vertices consecutively, and positive signs to all other edges.
Each internal vertex then has one negative and two positive incident
edges, while every leaf has one positive edge.  This gives counterexamples
with $4r+2$ vertices; $r=1$ is the preceding six-vertex tree.
All retain the convergence and unique phase-opposition sink of
Corollary~\ref{thm:tree}.  Their obstruction is the non-transverse geometry
of invariant manifolds.


\FloatBarrier

\subsection{Complete graphs and cycle constraints}

Complete graphs illustrate how cycles can change the critical-set
geometry.  For the unit-weight complete graph $\mathsf K_{N+1}$ with sinusoidal
interaction, define
\begin{equation}
	Z(\theta)=\sum_{j=1}^{N+1}e^{\mathrm i\theta_j},
	\qquad \mathrm i^2=-1.
	\label{eq:Z-def}
\end{equation}
The normalised quantity $|Z|/(N+1)$ is the classical Kuramoto order
parameter \cite{Kuramoto1984,PikovskyRosenblumKurths2001,
	ScardoviSarletteSepulchre2007}.  The Laplacian of $\mathsf K_{N+1}$ is
$(N+1)I_{N+1}-\one\one^T$, whose nonzero eigenvalue $N+1$ has
multiplicity $N$.  Hence $\Lambda=N+1$, and
Theorem~\ref{thm:finite-estimates} gives strict descent for
$0<h<2/(N+1)$ and invertibility of the full and reduced maps for
$0<h<1/(N+1)$.

\begin{proposition}[All-to-all interaction]
	\label{thm:all-to-all}
	The intrinsic potential is
	\begin{equation}
		\mathcal V_{\mathrm{all}}(\theta)
		=\sum_{1\le i<j\le N+1}\cos(\theta_i-\theta_j)
		=\frac12|Z(\theta)|^2-\frac{N+1}{2}.
		\label{eq:all-potential-order}
	\end{equation}
	Its absolute minima are precisely the \emph{balanced} configurations,
	those with $Z(\theta)=0$, and their potential value is $-(N+1)/2$.
\end{proposition}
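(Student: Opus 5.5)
We need to prove Proposition \ref{thm:all-to-all}: the identity for $\mathcal V_{\mathrm{all}}$ and the characterisation of its absolute minima.

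The plan is to expand $|Z(\theta)|^2$ directly. Write
\[
|Z(\theta)|^2=Z(\theta)\overline{Z(\theta)}=\sum_{j,k=1}^{N+1}e^{\mathrm i(\theta_j-\theta_k)} .
\]
Split the double sum into the diagonal terms $j=k$, which contribute $N+1$, and the off-diagonal terms. Pair each term $(j,k)$ with $(k,j)$; their sum is $2\cos(\theta_j-\theta_k)$. This gives
\[
|Z(\theta)|^2=N+1+2\sum_{i<j}\cos(\theta_i-\theta_j),
\]
which rearranges to \eqref{eq:all-potential-order}.

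Next, confirm that this expression is the intrinsic potential of \eqref{eq:sin-general-potential} for unit weights on $\mathsf K_{N+1}$. Each unordered pair is an edge, and $\cos(b_e^T\theta)=\cos(\theta_i-\theta_j)$ for either orientation by evenness, as in Proposition~\ref{lem:edge-potential}. So $\mathcal V_{\mathrm{all}}$ is exactly the stated sum, up to the additive constant allowed there.

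For the minima, \eqref{eq:all-potential-order} gives $\mathcal V_{\mathrm{all}}\ge -(N+1)/2$, with equality exactly when $Z(\theta)=0$. This bound is the absolute minimum value only if balanced configurations exist, so the one remaining step is to exhibit one for every $N\ge1$. Take equally spaced phases $\theta_j=2\pi j/(N+1)$. Then $Z$ is the sum of all $(N+1)$st roots of unity, which vanishes since $N+1\ge2$; for $N=1$ this is the antiphase pair $\{0,\pi\}$.

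Hence the infimum $-(N+1)/2$ is attained, and the absolute minima are precisely the balanced configurations. There is no real obstacle here: the only point needing care is this existence of a balanced configuration, without which the lower bound would not be the minimum.
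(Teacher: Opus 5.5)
Your proposal is correct and follows the paper's proof: expand $|Z|^2$ to get the identity, read off the lower bound $-(N+1)/2$ with equality exactly when $Z(\theta)=0$, and use equally spaced phases to show the bound is attained. You also spell out the identification with the edge potential up to an additive constant and the check that the roots of unity sum to zero, which the paper leaves implicit.
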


\begin{proof}
	Expand $|Z|^2$ to obtain \eqref{eq:all-potential-order}.  Equally spaced
	phases have $Z=0$, so the squared-modulus formula identifies the minimum
	and all its minimisers.
\end{proof}

For three labelled oscillators, fixing $\theta_3=0$ makes the remaining
two unit vectors complete an equilateral triangle.  The reduced minima
are therefore exactly
\[
	x=\left(\frac{2\pi}{3},\frac{4\pi}{3}\right),
	\qquad x=\left(\frac{4\pi}{3},\frac{2\pi}{3}\right).
\]
These are the attracting states of the three-oscillator all-to-all model
\cite{DAnielloOliveira2024,BuescuOliveira2026MorseSmale}; the bound
$h<1/3$ agrees with the parameter range of its direct analysis.
For $N\ge3$, the balanced set contains positive-dimensional families
even after removal of the common phase.  The reduced potential is
degenerate there, so the Morse and finite-critical-set conclusions do
not apply.

For general $P$-periodic edge laws, an assignment $z=(z_e)$ comes from
vertex phases exactly when it closes around every cycle.  Orient each
cycle $\mathcal C$ and let $\sigma_{\mathcal C e}=1$ if edge $e$ agrees
with that orientation, and $-1$ otherwise.  The compatibility condition is
\begin{equation}
	\sum_{e\in\mathcal C}\sigma_{\mathcal C e}z_e\equiv0\pmod P
	\qquad\text{for every cycle }\mathcal C.
	\label{eq:cycle-compatibility}
\end{equation}
Necessity follows by summing phase differences.  For sufficiency, assign
phases along a spanning tree; the cycle conditions give the prescribed
difference on every remaining edge.

Choose a global minimum $m_e$ of each $\Phi_e$.  All edge minima can
be attained simultaneously exactly when some such choice satisfies
\eqref{eq:cycle-compatibility}.  The network is then called
\emph{unfrustrated}, and otherwise \emph{frustrated}
\cite{Toulouse1977}.  Trees have no cycle constraints.  An odd cycle
with sinusoidal interaction is the simplest frustrated graph, since
assigning $\pi$ to every edge violates closure.  The potential and
finite-step bounds remain valid on graphs with cycles, but their
critical points must satisfy \eqref{eq:cycle-compatibility} and need
not minimise every edge term.

Absence of frustration does not by itself reproduce the tree picture.
With sinusoidal interaction every bipartite graph is unfrustrated, and
adding $\pi$ to the phases of one bipartition class conjugates the
dynamics to that of attractive coupling, as in
Remark~\ref{rem:sinusoidal-sign-conjugacy}.  For attractive coupling on
the cycle $C_n$, the twisted states $\theta_j=2\pi qj/n$ are
stable local minima of the attractive potential $-\sum\cos(\theta_i-\theta_j)$ whenever
 $2\pi|q|/n<\pi/2$ \cite{WileyStrogatzGirvan2006}.
Hence every even cycle with $n\ge6$ is unfrustrated, yet has local minima
other than phase opposition; see also \cite{LingXuBandeira2019} for the
landscape of such networks.  On trees, by contrast,
Corollary~\ref{thm:tree} shows that phase opposition is the only local
minimum.

\section{Scope of applicability and physical connections}
\label{sec:applications}

The results apply to any discrete phase update of the form
\eqref{eq:network}: the Huygens-type models that motivate this work
\cite{OliveiraMelo2015,DAnielloOliveira2024,BuescuDAnielloOliveira2025Aligned},
and sampled or impulsive phase controllers with positive symmetric weights
and smooth periodic odd interactions.  For higher-dimensional oscillators,
a phase reduction must first produce such a map
\cite{Nakao2016PhaseReduction}.
Corollary~\ref{cor:matrix-structural-stability} then extends the path and
three-leaf-star conclusions to sufficiently small perturbations, provided
the reduced phase space is unchanged.

Several physical systems share this pairwise structure but are usually
modelled in continuous time; our results reach them only through a
discrete update of the form \eqref{eq:network}.  Resistively coupled
Wien-bridge oscillators admit an effective Sakaguchi--Kuramoto
description whose symmetric zero-phase-lag limit has odd sinusoidal
interaction \cite{EnglishZengMertens2015}, with a coupling sign that must
be matched to ours.  Oscillator-based Ising machines encode binary states
in relative phases and use symmetric couplings to realise an Ising-type
energy \cite{WangWuNobelRoychowdhury2021}; the trees of
Section~\ref{subsec:general-trees} provide an exactly solvable test case,
although coupling architecture, injection locking, bias terms, circuit
non-idealities and even components of the realised interaction can alter
the intended energy dynamics
\cite{MoyEtAl2022,LoEtAl2023,GraberHofmann2024,ChengDaiStanLin2026Delayed}.
Nearest-neighbour arrays of MEMS oscillators
\cite{BhaskarShayakRandZehnder2021} are natural candidates for the path
results, once amplitude dynamics, detuning and forcing are accounted for.
Finally, radial electrical networks are trees, and network-reduced
power-system models can be related to nonuniform Kuramoto oscillators
\cite{DorflerBullo2012Power}.  In our convention, lossless coupling
corresponds to $g(s)=-\sin s$, whose edge potential $-\cos s$ is
minimised at equal phase, and
Remark~\ref{rem:sinusoidal-sign-conjugacy} relates it to the sign treated
here.  Heterogeneous injections, losses, different time constants and
control loops nevertheless prevent a direct identification of complete
power-system models with \eqref{eq:network}.

\section{Conclusions}
\label{sec:conclusion}

The matrix criterion establishes Morse--Smale dynamics for
$z\mapsto z+hK\sin z$ when $K$ is symmetric, nonsingular and weakly
diagonally dominant (and thus positive definite) throughout $0<h<1/\lambda_{\max}(K)$.
 This interval is optimal.
It covers unit-weight paths of every finite length and the three-leaf star. 
A weighted version extends it to paths with discretely concave weights.
Structural stability then supplies open sets of positive weights on every
graph containing a spanning path, with sufficiently weak additional links.
These conclusions also hold for the opposite sinusoidal sign.
The permitted perturbation size depends on the fixed map, including $h$.

The tree counterexamples show why transversality requires a separate
condition. For every $r\ge1$, a unit-weight tree with $4r+2$ vertices has
a non-transverse heteroclinic intersection throughout its stated
diffeomorphism interval. Its potential is Morse, and its unique sink
still attracts almost every orbit. Convergence and hyperbolicity therefore
do not imply the Morse--Smale property.

For general reciprocal networks, the intrinsic potential and the weighted
Laplacian bound $\Lambda$ in~\eqref{eq:Lambda} give two sufficient intervals.
For $0<h<2/\Lambda$, the potential decreases strictly away from its
critical set, and every reduced orbit approaches that set; finiteness
ensures convergence to one fixed point. For $0<h<1/\Lambda$, the maps
are global diffeomorphisms and, if the reduced potential is Morse, almost
every orbit converges to a minimum.  On sinusoidal trees this minimum is
phase opposition, the ground state of the associated antiferromagnetic
Ising energy.

The Morse--Smale and structural stability conclusions concern the dynamics
after removal of the common phase. The full map retains a neutral
direction. Small perturbations may break reciprocity or the potential
structure, but must preserve common-phase symmetry to act on the same
quotient. Delays and amplitude dynamics require separate analysis.

Several questions remain open.  Neither the criterion nor the obstruction
decides the unit-weight stars $\mathsf K_{1,n}$ with $n\ge4$, which are not
diagonally dominant and admit no signs satisfying
\eqref{eq:tree-sign-condition}, nor trees with generic weights beyond the concave path weights of Corollary~\ref{cor:wpath}.  The
counterexamples rely on an invariant circle that exists for unit weights;
whether their non-transversality persists under small changes of the
weights is not known.  Finally, we expect the maximum argument of
Section~\ref{sec:morse-smale} to apply also to the adjoint of the gradient
flow $\dot z=K\sin z$, giving a continuous-time counterpart of the
criterion.

\section*{Declarations}

\paragraph{Data availability.}
No datasets were generated or analysed during the preparation of this article.
All proofs and calculations are presented in the paper.

\paragraph{Author contributions.}
All authors contributed equally to all stages of the work. All authors reviewed
and approved the final manuscript.

\paragraph{Funding.}
Jorge Buescu acknowledges partial support from Funda\c{c}\~ao para a Ci\^encia e a
Tecnologia (FCT), Portugal, through project UID/04561/2025
(DOI: \nolinkurl{10.54499/UID/04561/2025}). Emma D'Aniello acknowledges partial support
from the Erasmus+ programme, the 2024 DYNAMIChE project of the INdAM Group
GNAMPA, PRIN 2022 QNT4GREEN, the DAISY project of Universit\`a degli Studi
della Campania ``Luigi Vanvitelli'' (D.R.~111, 09/02/2024), and the UMI-TAA
Group on Approximation Theory and Applications. Henrique M. Oliveira
acknowledges partial support from FCT, Portugal, through CAMGSD -- Centro de
An\'alise Matem\'atica, Geometria e Sistemas Din\^amicos, under projects
UID/04459/2025 (DOI: \nolinkurl{10.54499/UID/04459/2025}) and UID/PRR/04459/2025
(DOI: \nolinkurl{10.54499/UID/PRR/04459/2025}), financed by national funds and by the
European Union through the Recovery and Resilience Plan (PRR) --
NextGenerationEU.

\paragraph{Competing interests.}
The authors declare that they have no competing financial or non-financial
interests that could have influenced the work reported in this manuscript.

\begingroup
\small
\bibliographystyle{plain}
\bibliography{reciprocal_gradient_theory_refs}
\endgroup
\end{document}